%% file: main.tex
\documentclass[11pt,a4paper]{amsart}

\usepackage[margin=1in]{geometry}
\usepackage{amsmath,amssymb,amsthm,mathtools}
\usepackage{tikz}
\usetikzlibrary{decorations.markings}
\usepackage{subcaption}
\usepackage{hyperref}
\usepackage{float}
\usepackage{fix-cm}

\newtheorem{theorem}{Theorem}
\newtheorem{proposition}[theorem]{Proposition}
\newtheorem{lemma}[theorem]{Lemma}
\newtheorem{corollary}[theorem]{Corollary}
\newtheorem*{mainformula}{Lemma A}
\newtheorem*{mainintermediate}{Theorem B}
\newtheorem*{maincorollary}{Corollary C}

\theoremstyle{definition}
\newtheorem{definition}[theorem]{Definition}

\theoremstyle{remark}

\DeclareMathOperator{\crn}{cr}
\DeclareMathOperator{\jcr}{jcr}
\DeclareMathOperator{\bis}{ebis}
\newcommand{\Fpr}{\mathbb{P}}

\title{Asymptotics of the Meandering Number of a Cyclic Permutation}
\date{}

\author[Charles Daly]{Charles Daly}
\address{Max Planck Institute for Mathematics in the Sciences}
\email{\href{mailto:charles.daly@mis.mpg.de}{charles.daly@mis.mpg.de}}

\author[Diaaeldin Taha]{Diaaeldin Taha}
\address{Max Planck Institute for Mathematics in the Sciences}
\email{\href{mailto:Diaaeldin.Taha@mis.mpg.de}{diaaeldin.taha@mis.mpg.de}}

\dedicatory{In honor of Richard Schwartz on the occasion of his 60th birthday.}

\begin{document}

\begin{abstract}
A cyclic meander is an embedded oriented loop in the plane intersecting a fixed infinite line, or circle, transversely in a linearly ordered set of $2n$ points.  By keeping track of the order in which the loop visits these points, the cyclic meander induces a cyclic permutation on these marked points.  Correspondingly, given a permutation on $n$ letters, one can ask whether or not a cyclic meander induces the permutation in this manner, and if not, what is the most efficient way of doing so if we allow more points of intersection?  This process gives a way of associating to a permutation on $n$ letters a measurement of complexity of the permutation in question. The principal result of this work shows that the maximum of this quantity, the \emph{meander number}, over all cyclic permutations on $n$ letters, is bounded above and below quadratically in $n$.  This result resolves a conjecture of Schwartz~\cite{richtpss} in relation to his work on the topological salesman problem.  We conclude this work by proving the existence of families of cyclic permutations on $n$ letters whose meander numbers realize a continuum of growth rates between linear and quadratic.
\end{abstract}

\maketitle


\section{Introduction}
\label{sec:introduction}

\input{figures/meander-jordan-dictionary}

A \emph{cyclic meander} is a configuration of two simple closed embedded and oriented loops in the plane with non-empty transverse intersections.  One loop is frequently taken to be the $x$-axis in the plane, compactified at infinity, and oriented from left to right.  The other loop is depicted with all of its intersections occurring transversely along the $x$-axis.  This sort of configuration is well defined up to isotopies which fix the intersections and preserve transversality.  Frequently we will take them to be piecewise linear curves.  An example of a cyclic meander can be seen in Figure \ref{fig:mjd} (A) where the $x$-axis is drawn in bold and the other loop travels clockwise intersecting the axis in six points labeled $0, 1, \hdots, 5$ as ordered from the $x$-axis. Note this cyclic meander induces a permutation of the set $\{0,1,\hdots, 5\}$ given by following the dotted loop and keeping track of which intersection points are visited in order.  We thus say the permutation $(0,3,4,5,2,1)$ is \emph{meandric} as it comes from a cyclic meander.  It is clear not all cyclic permutations in $S_{N}$ are meandric, however, it is possible they may be realized along a cyclic meander if more points of intersection are allowed.  In this way, we say the cyclic meander induces the permutation.  
\\
\\
The study of meanders can be found across broad swathes of mathematical literature.  They appeared as early as in the work of Henri Poincar\'{e} where he used them to try to prove what is now known as the Poincar\'{e}-Birkhoff Theorem \cite{Poincare1912Sur}.  The combinatorial structure of meanders can be used to analyze compact foldings of closed polymer chains \cite{DIFRANCESCO199797}.  Much more recently Schwartz introduced the so-called \emph{topological salesman problem} wherein he considers the infimal length required to connect \emph{every} ordered collection of $N$ points on $[0,1]^{2}$ in a given order by an embedded path \cite{richtpss}.  In the work he relates this infimal length to the quantity $\mu_{N}$, which is minimal length of a cyclic permutation for which \emph{every} permutation on $N$ letters can be realized as a cyclic meander of length no more than $\mu_{N}$.  He shows that if the quantity is bounded below quadratically in the number of points $N$, then the infimal length of the topological salesman problem must grow on the order of $N^{3/2}$. He then shows that $\mu_N$ has a quadratic upper bound, and conjectures that it also has a quadratic lower bound --- a conjecture we refer to as the \emph{Schwartz quadratic meander number conjecture}. In this note, we show the conjecture holds for sufficiently large $N$.\\
\\
Before stating our main results, we separate two core objects in our work.  First, we record the two cyclic orders as a multigraph.  Then we introduce a Jordan-curve crossing number that measures how many unmarked intersections are needed to realize those two orders by two simple closed curves.  Our work relies on building a dictionary between those combinatorial and geometric objects.
\\
\\
We now fix notation. For each positive integer $n$ write the \emph{set}
\begin{equation*}
        [n]=\{0,1,\ldots,n-1\}.
\end{equation*}
A cyclic permutation $\sigma$ of $[n]$ is a permutation of $[n]$ with a single cycle.  Frequently we will identify $\sigma$ with its cycle notation below.    
\begin{equation*}
        \sigma=(\sigma_0,\sigma_1,\ldots,\sigma_{n-1})
\end{equation*}
This notation is understood up to cyclic rotation.  We note that every one of the $n!$ linear orderings $\sigma$ of $[n]$ induces a cyclic permutation given by the cycle above, and two linear orderings determine the same cyclic permutation if and only if they differ by a shift in the ordering.  One should think of a cyclic permutation as determining a Hamiltonian cycle graph on the set of vertices $[n]$ which tells you how to get from one vertex to the next.  We denote the standard linear ordering of $[n]$, $0 < 1 < \hdots < n-1$ by $\sigma_{0}$, and when we wish to emphasize the linear ordering on $[n]$ induced by a permutation $\sigma$, we shall decorate $[n]$ with the subscript $\sigma$, as in $[n]_{\sigma}$.  When we equip $[n]$ by the standard ordering induced by the identity permutation on $[n]$, we denote it by $[n]_{0}$.  
\\
\\
Let $N$ be an even number and consider $[N]_{0}$.  We define a \emph{matching} to be a collection of ordered pairs $\{(a_{i},b_{i})\}_{i=1}^{i=m}$ inside $[N]_{0}$ with $a_{i} < b_{i}$.  One should think of these as arcs all of which lie entirely above, or below, the real line connecting $a_{i}$ to $b_{i}$ for each $i$.  A matching is \emph{non-crossing} if it does not contain two pairs $\{a_{1},b_{1}\}$ and $\{a_{2},b_{2}\}$ with $a_{1}<a_{2}<b_{1}<b_{2}$.  \\
\\
A cyclic permutation $\sigma = (\sigma_{0},\sigma_{1},\hdots, \sigma_{N-1})$ is called \emph{cyclic meanderic} if the two alternating matchings
\begin{equation*}
        \bigl\{\{\sigma_0,\sigma_1\},\{\sigma_2,\sigma_3\},\ldots,
        \{\sigma_{N-2},\sigma_{N-1}\}\bigr\}
\end{equation*}
and
\begin{equation*}
        \bigl\{\{\sigma_1,\sigma_2\},\{\sigma_3,\sigma_4\},\ldots,
        \{\sigma_{N-1},\sigma_0\}\bigr\}
\end{equation*}
are non-crossing on $[N]_{0}$.  This definition is a purely combinatorial way to express the topological definition of a cyclic meander which is a pairing of two Hamiltonian cycles, one corresponding to the real axis going through $n$-labeled points in the order of $[n]$, and the other cycle going through the points in the order of $[n]_{\sigma}$.    
\\
\\
If $S\subset [N]_{0}$, the \emph{first-return permutation} of $\sigma$ on $S$ is obtained by reading $\sigma$ cyclically and deleting all letters outside $S$.  After relabelling the elements of $S$ increasingly by $[|S|]_{0}$, this gives a cyclic permutation of $[|S|]$. See the Figure \ref{fig:mjd} for an example where the cyclic meanderic permutation $(0,3,4,5,2,1)$ induces $(0,2,3,1)$ where $S = \{0,1,4,5\}$.  Following Schwartz, we recall the definition of the meander number of a cyclic permutation $\sigma$ from Section 3.2 \cite{richtpss}.

\begin{definition}\label{def:meander_number}
For a cyclic permutation $\sigma$ of $[n]$, define the \emph{meander number} $\mu(\sigma)$ to be the least even $N$ for which there exists a cyclic meandric permutation on $[N]$ whose first-return permutation on some $n$-element subset is $\sigma$.  For each $n$, we define $\mu_{n}$ to be the the largest $\mu(\sigma)$ for all $\sigma \in S_{n}$. 
\end{definition}
As we will be interested in asymptotics and growth rates, we briefly recall that a function $f(n)$ is $\Theta(g(n))$ if there are positive constants $c$ and $C$ for which $c g(n) \leq f(n) \leq C g(n)$ for sufficiently large $n$.  Part of our results shows that $\mu_{n} = \Theta(n^{2})$ as in Corollary \ref{cor:random-global} which shows Schwartz's quadratic meander number conjecture is true for sufficiently large $n$.  
\\
\\
We first establish an exact relationship between the meander number and the crossings of Jordan curves.  A \emph{PL Jordan realization} of $\sigma$ is a
pair of oriented piecewise-linear Jordan curves $A,B\subset S^2$, meeting
transversely in finitely many points, together with a distinguished $n$-element set
$S\subset A\cap B$ of marked intersections points in $A\cap B$ such that the marked points occur
in the natural cyclic order $[n]_{0}$ along $A$ and in the order $[n]_{\sigma}$, given by $\sigma_{0} < \sigma_{1} < \hdots < \sigma_{n}$, along $B$.  Let
\begin{equation*}
        \jcr(\sigma)=
        \min\{|A\cap B|-n:\ (A,B,S)\text{ is a PL Jordan realization of }\sigma\}.
\end{equation*}
We begin by arguing that no information is lost in passing from meanders to Jordan PL curves as in the proposition below.
\begin{mainformula}[PL Jordan realization formula]
For every cyclic permutation $\sigma$ of $[n]$,
\begin{equation*}
        \mu(\sigma)=n+\jcr(\sigma).
\end{equation*}
\end{mainformula}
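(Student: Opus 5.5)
We prove the two inequalities $\mu(\sigma)\le n+\jcr(\sigma)$ and $\mu(\sigma)\ge n+\jcr(\sigma)$ separately. In each direction we translate a cyclic meandric permutation into a pair of Jordan curves, or the reverse, without changing the total number of intersection points.

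For $n+\jcr(\sigma)\ge\mu(\sigma)$, take a PL Jordan realization $(A,B,S)$ with $|A\cap B|=n+\jcr(\sigma)=:N$. By the Jordan--Schoenflies theorem, which holds in the PL category, there is a PL homeomorphism of $S^2$ carrying $A$ to the compactified $x$-axis $\mathbb{R}\cup\{\infty\}$ with its orientation. We may also arrange that the point at infinity is not in $A\cap B$. Label the $N$ points of $A\cap B$ by $[N]_0$ in their left-to-right order along the axis.

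Since $B$ is transverse to $A$, the curve $B$ alternates between the upper and lower half-planes. Its arcs in each half-plane are disjoint embedded arcs with endpoints on the axis, and disjoint arcs in a half-plane give a non-crossing matching. Hence the cyclic order in which $B$ visits $A\cap B$ is a cyclic meandric permutation $\tau$ on $[N]$. Here $N$ is even because $B$ crosses $A$ an even number of times. Deleting the unmarked letters, the first-return permutation of $\tau$ on $S$ is exactly $\sigma$. This uses the fact that the relabeling of $S$ by increasing order agrees with the order of $S$ along $A$. Therefore $\mu(\sigma)\le N$.

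For $\mu(\sigma)\ge n+\jcr(\sigma)$, take a cyclic meandric $\tau$ on $[N]$, with $N=\mu(\sigma)$, whose first-return permutation on an $n$-subset $S$ is $\sigma$. Let $A$ be the compactified axis and place the points $0,\dots,N-1$ on it. Draw the first matching $\{\tau_{2i},\tau_{2i+1}\}$ as disjoint PL arcs in the upper half-plane, using nested semicircle-like polygons. This is possible precisely because the matching is non-crossing. Draw the second matching in the lower half-plane in the same way.

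The union $B$ of these arcs is a single closed curve because $\tau$ is one cycle. It is embedded, since arcs in the same half-plane are disjoint and arcs in opposite half-planes meet only at their axis endpoints. It is transverse to $A$, since each point of $B\cap A$ has one upper arc and one lower arc. Orient $B$ along $\tau$. Then $(A,B,S)$ is a PL Jordan realization of $\sigma$ with $|A\cap B|=N$, so $\jcr(\sigma)\le N-n$.

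The main obstacle is the bookkeeping in the first direction, which has two parts.

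1. \emph{Choice of basepoint.} Placing $\infty$ on $A$ off $A\cap B$ turns the cyclic order along $A$ into a linear order. We must check that any rotation of this order still yields $\sigma$ up to the cyclic conventions, since $\sigma$ is only defined up to rotation. We must also check that the first-return relabeling is compatible with cyclically rotating $[N]_0$. One remedy is to choose the cut point between the largest and smallest elements of $S$ along $A$, so that the induced order on $S$ is exactly $[n]_0$.

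2. \emph{Orientation.} If the homeomorphism reverses the orientation of $A$, compose it with a reflection, which preserves the PL structure.
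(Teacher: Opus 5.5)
Your proposal is correct and follows essentially the same route as the paper. Both directions translate between the two non-crossing alternating matchings and the disjoint arcs of $B$ on the two sides of $A$, preserving $N=|A\cap B|$. Your use of Schoenflies is only making explicit what the paper's ``two complementary discs'' of $A$ already uses.

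Your basepoint normalization, cutting $A$ between the marked points labelled $n-1$ and $0$, is the needed detail, and the paper leaves it implicit. Rely on that remedy rather than the proposed ``any rotation'' check: an arbitrary cut only recovers $\sigma$ up to conjugation by a rotation of $[n]$, which is in general a different cyclic permutation.
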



Second, to a cyclic permutation $\sigma$ we associate the following multigraph.
\begin{definition}[Cycle-pair multigraph]\label{def:cpmg}
For a cyclic permutation $\sigma$ of $[n]$ with $n\ge 2$, the \emph{cycle-pair
multigraph} $G_{\sigma}$ is the $4$-valent multigraph on vertex set $[n]$ formed as the union of
two Hamiltonian cycles: the \emph{natural cycle}
\begin{equation*}
        C_0=(0,1,\ldots,n-1,0),
\end{equation*}
and the \emph{$\sigma$-cycle}
\begin{equation*}
        C_\sigma=(\sigma_0,\sigma_1,\ldots,\sigma_{n-1},\sigma_0).
\end{equation*}
\end{definition}

This multigraph encodes the data of the two Jordan curves we wish to draw on the plane with the least amount of additional intersections.  Studying $G_{\sigma}$ will allow us to use graph-theoretic tools to relate the Jordan crossing number of $\sigma$ to a quantity that measures in some sense how much the $G_{\sigma}$ is expanding as in Equation \ref{eq:bis-wid}.  
\\
\\
Every PL Jordan realization of $\sigma$ gives a drawing of $G_{\sigma}$, and therefore the Jordan crossing number $\jcr(\sigma)$ is bound from below by the crossing number of $G_{\sigma}$.  Recall the crossing number of a multigraph $G$, denoted by $\crn(G)$, is the minimum number of edge crossings required to realize the multigraph in the plane.  The main content of this work in Section \ref{sec:lower-bound} provides a lower bound on this crossing number for $G_{\sigma}$ for certain permutations $\sigma$ as in Theorem \ref{thm:random-path}.  Moreover we show that these permutations are generic in the sense that most permutations satisfy this lower bound.  
\\
\\
The upper-bound for $\jcr(\sigma)$ is constructive.  We break the cyclic permutation into \emph{tiles}, as in Lemma \ref{lem:tile}, and bound the number of crossing above on each tile.  The idea to induce the cyclic permutation is as follows.  We form a grid of $n(n+1)$-points and connect the points along lines of fixed $y$-coordinates by going from left to right then right to left and alternating in this manner.  This will form our curve $A$.  We do the same process but now instead connect points along fixed $x$-coordinates going from bottom to top then top to bottom and alternating in this manner.  Figure \ref{fig:permutation-tile} illustrates this process.  This method will produce a quadratic upper bound in $n$ on the Jordan crossing number of $\sigma$.  
\\
\\
Putting these pieces together gives the main quantitative result of the paper.
\begin{mainintermediate}[All intermediate scales]
Let $L(n)$ be an integer-valued function such that
\begin{equation*}
        L(n)\to\infty,
        \qquad
        1 \le L(n) \le n.
\end{equation*}
Then there are cyclic permutations $\sigma_n$ of $[n]$ such that
\begin{equation*}
        \mu(\sigma_n)=n+\Theta(nL(n)).
\end{equation*}
\end{mainintermediate}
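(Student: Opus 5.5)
By Lemma A it suffices to construct cyclic permutations $\sigma_n$ of $[n]$ with $\jcr(\sigma_n)=\Theta(nL(n))$. The plan is to build $\sigma_n$ as a concatenation of roughly $n/L$ blocks, writing $L=L(n)$. Each block is a cyclic permutation on about $L$ consecutive letters whose Jordan crossing number is $\Theta(L^2)$. Theorem \ref{thm:random-path} together with the tile upper bound of Lemma \ref{lem:tile} supplies such blocks once $L$ is large, and the hypothesis $L(n)\to\infty$ guarantees this for $n$ large. The blocks are glued so that the interactions between them are cheap.

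\textbf{Construction.} Write $n=qL+r$ with $0\le r<L$. Partition $[n]_0$ into consecutive intervals $I_1,\dots,I_q$ of length $L$, and absorb the remainder into the last interval, so every interval has length between $L$ and $2L$. On each $I_j$ choose a cyclic permutation $\tau_j$ that satisfies the quadratic lower bound $\crn(G_{\tau_j})\ge c|I_j|^2$. Theorem \ref{thm:random-path} provides such permutations, in fact most of them, for $|I_j|$ large.

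Define $\sigma_n$ by traversing $I_1$ in the order $\tau_1$ (cut open at its minimum), then $I_2$ in the order $\tau_2$, and so on, and finally closing the cycle. In the concatenation, the $\sigma$-cycle visits each interval in one contiguous run. The natural cycle also traverses each interval contiguously.

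\textbf{Upper bound.} Realize each block by the tile construction of Lemma \ref{lem:tile} inside a disk $D_j$, using $O(|I_j|^2)=O(L^2)$ extra crossings. Place the disks $D_1,\dots,D_q$ along a line. Both curves $A$ and $B$ enter and exit each disk once, essentially in parallel, so the connecting arcs between consecutive disks need only $O(1)$ crossings each. Closing up from $D_q$ back to $D_1$ costs $O(1)$ as well, since the return arcs of $A$ and $B$ can run side by side outside all the disks.

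The only subtlety is that cutting $\tau_j$ open at a point forces the entry and exit points of the tile to lie on the boundary. I will handle this by modifying the tile with $O(L)$ extra crossings. The total is then $q\cdot O(L^2)+O(q)=O(nL)$.

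\textbf{Lower bound.} Since $\jcr\ge\crn(G_{\sigma_n})$, it suffices to bound the crossing number. The key observation is that $G_{\sigma_n}$ contains, for each $j$, a subdivision of a graph close to $G_{\tau_j}$: the edges inside $I_j$ are those of $G_{\tau_j}$ except the two closing edges, and these can be replaced by paths through other blocks.

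I would rather avoid subdivisions, which is the main obstacle: the lower bound must come from edges that are disjoint across blocks, so that crossings are counted separately. My first attempt is to apply the lower-bound machinery of Section \ref{sec:lower-bound} directly to each induced subgraph $G_{\sigma_n}[I_j]$. This subgraph is $G_{\tau_j}$ with at most four edges removed, and removing $k$ edges lowers the crossing number by at most $k$ times the maximum number of crossings on an edge. To keep that loss controlled, I would instead rerun the bisection/expansion argument behind Theorem \ref{thm:random-path}, which is robust under deleting $O(1)$ edges because it depends on edge expansion. This gives $\crn(G_{\sigma_n}[I_j])\ge cL^2$.

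The induced subgraphs are edge-disjoint, so in an optimal drawing of $G_{\sigma_n}$ each crossing between edges of the same block is counted once. Summing over blocks gives $\crn(G_{\sigma_n})\ge \sum_j cL^2\ge c' nL$.

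\textbf{Small values of $L$.} If $L(n)$ is bounded on some subsequence the statement is vacuous, but the hypothesis $L\to\infty$ excludes this. The case $L=n$ is exactly Corollary C and needs a single block. In either case $\mu(\sigma_n)=n+\Theta(nL(n))$.
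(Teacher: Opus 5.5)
Your proposal is correct and takes essentially the paper's route: consecutive blocks of size about $L$ carrying crossing-rich orders from Theorem \ref{thm:random-path}, the tile upper bound of Lemma \ref{lem:tile} and Proposition \ref{prop:block-upper}, and a lower bound obtained by summing crossing numbers over edge-disjoint block subgraphs as in Proposition \ref{prop:block}. The detours you anticipate are unnecessary. If you place the linear order $\sigma_L$ from Theorem \ref{thm:random-path} directly in each block, instead of cutting open a cyclic $\tau_j$, then $G_{\sigma_n}[I_j]$ is exactly $H_{\sigma_L}$, so no edge-deletion robustness argument is needed; and Lemma \ref{lem:tile} already realizes a linear order with endpoints on $\partial Q$, so the tiles need no $O(L)$ modification and the connectors need no extra crossings.
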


By choosing the block size $L(n)$, one can realize all scales between the linear and quadratic regimes covered by this theorem. Taking $L(n)=n$ gives the quadratic-growth consequence relevant to Schwartz's conjecture.

\begin{maincorollary}
There are constants $c,C>0$ such that for all $n$,
\begin{equation*}
        c n^2
        \le
        \mu_{n}
        \le
        C n^2.
\end{equation*}
\end{maincorollary}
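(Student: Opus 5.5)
The plan is to derive Corollary C from Lemma A, Theorem B, and the constructive grid/tile upper bound outlined in the introduction. The lower and upper bounds are handled separately, and small $n$ is absorbed into the constants at the end.

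For the \emph{lower bound}, apply Theorem B with $L(n)=n$. This choice is admissible: it is integer-valued, satisfies $1\le L(n)\le n$, and tends to infinity. Theorem B then produces cyclic permutations $\sigma_n$ of $[n]$ with $\mu(\sigma_n)=n+\Theta(n^2)$. In particular there is a constant $c_0>0$ and a threshold $n_0$ such that $\mu(\sigma_n)\ge c_0n^2$ for all $n\ge n_0$. Since $\mu_n$ is defined as the maximum of $\mu(\sigma)$ over cyclic $\sigma$, we get $\mu_n\ge \mu(\sigma_n)\ge c_0 n^2$ for $n\ge n_0$. Alternatively, the lower half of the $\Theta$ could be read off directly from Theorem \ref{thm:random-path}, via $\mu(\sigma)=n+\jcr(\sigma)\ge n+\crn(G_\sigma)$, using that every PL Jordan realization draws $G_\sigma$. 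In either route the genuine content (the quadratic crossing-number bound for the expanding graphs $G_\sigma$) is already contained in the cited results, so I take it as given.

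For the \emph{upper bound}, Theorem B only controls the particular sequence $\sigma_n$, so I need a uniform bound over all cyclic $\sigma$. I would use the grid construction from the introduction (Lemma \ref{lem:tile} with a single tile of size $n$). Take $A$ to be the boustrophedon curve through the rows of an $n\times(n+1)$ grid and $B$ the boustrophedon curve through the columns. Then mark the $n$ grid points $(i,\sigma^{-1}(i))$, suitably arranged, so that $A$ visits them in order $[n]_0$ and $B$ in order $[n]_\sigma$. These two PL Jordan curves meet in $O(n^2)$ points, which gives $\jcr(\sigma)\le C_0n^2$ for every $\sigma$. Lemma A then yields $\mu(\sigma)=n+\jcr(\sigma)\le n+C_0n^2$, and taking the maximum gives $\mu_n\le (C_0+1)n^2$. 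This bound also agrees with Schwartz's earlier quadratic upper bound \cite{richtpss}, which could be cited instead.

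Finally, the small values $n<n_0$ must be absorbed. For each such $n$ we have $\mu_n\ge n\ge 1$ (or $\mu_n\ge 2$), and $\mu_n$ is finite by the upper bound. Shrinking $c$ to $\min\bigl(c_0,\min_{n<n_0}\mu_n/n^2\bigr)>0$ therefore gives $c n^2\le\mu_n$ for all $n$. The upper bound already holds for all $n$, with $C=C_0+1$.

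The main obstacle is ensuring that the upper-bound construction genuinely yields two \emph{simple} closed curves with the marked points in both prescribed cyclic orders, including how the boustrophedon curves are closed up without extra crossings beyond $O(n^2)$. All the difficulty of the lower bound lives inside Theorem B and Theorem \ref{thm:random-path}, which the proof takes as given.
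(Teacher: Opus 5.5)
Your proposal is correct and is essentially the paper's argument. The lower bound comes from Theorem B with $L(n)=n$, or equivalently from Theorem~\ref{thm:random-path} via $\mu(\sigma)=n+\jcr(\sigma)\ge n+\crn(G_\sigma)\ge n+\crn(H_\sigma)$; the upper bound comes from the single-tile construction of Corollary~\ref{cor:global-upper}, giving $\mu_n\le n^2+n\le 2n^2$; and the finitely many small $n$ are absorbed by shrinking $c$, exactly as the paper does.
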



\subsection*{AI Methodology} 

The authors used AI systems during the research and writing process.  Experiments
with AlphaEvolve \cite{DBLP:journals/corr/abs-2506-13131,georgiev2025mathematical} were used to search for and analyze meandric permutations. These
experiments helped the authors develop a more nuanced understanding of meandric permutations and informed some of the constructions in this work.  The
results of the paper, however, are proved independently of these experiments.
The authors also used large language models, including ChatGPT 5.5, Claude Opus 4.7, and Gemini 3.1, as research assistants, mainly to explore potentially relevant existing results for this work. Only a small portion of those explored results appear in this manuscript. The authors directed all use of these tools, then wrote and verified the proofs, and produced the figures with some assistance from the models.

\subsection*{Organization}
This note is organized as follows.  Section \ref{sec:pl-jordan} defines PL Jordan realizations and proves the exact formula $\mu=n+\jcr$.  Section \ref{sec:upper-bound} gives the rectangular wiring construction and the resulting upper bounds.  Section \ref{sec:lower-bound} proves the block lower bound and constructs crossing-rich tiles. Section \ref{sec:growthrates} assembles these pieces to obtain the intermediate-regime theorem and the random permutation corollaries.

\subsection*{Acknowledgements}

 We thank the AlphaEvolve team at Google DeepMind for access to the system and for discussions about auditable AI-assisted mathematical discovery.


\section{PL Jordan realizations}
\label{sec:pl-jordan}

This section recasts the meander number problem in a convenient geometric-topological model: studying the intersection pattern of two piecewise-linear Jordan curves on the sphere. Lemma~\ref{lem:exact} makes the connection between meander numbers and piecewise-linear Jordan curves exact, and Lemma~\ref{lem:cr-lower} relates the relevant number of intersections of two piecewise-linear Jordan curves to crossing numbers of multigraphs.  This allows us to use graph-theoretic machinery, which we do in Section~\ref{sec:lower-bound}.

\begin{definition}
Let $\sigma$ be a cyclic permutation of $[n]$.  A \emph{PL Jordan
realization} of $\sigma$ is a pair of oriented piecewise-linear Jordan
curves $A,B\subset \mathbb{S}^2$ meeting transversely in finitely many points, together with a distinguished
subset $S\subset A\cap B$, $|S|=n$,
such that, after labelling the points of $S$ by $0,1,\ldots,n-1$ in
their cyclic order along $A$, their cyclic order along $B$ is $\sigma$. The \emph{PL Jordan crossing number} is
\begin{equation}
        \jcr(\sigma)
        =
        \min\{|A\cap B|-n:\ (A,B,S)\text{ is a PL Jordan realization of }
        \sigma\},
\end{equation}
\end{definition}

The next proposition provides a basic dictionary for passing between meanders and Jordan curves.
\begin{lemma}\label{lem:exact}
For every cyclic permutation $\sigma$ of $[n]$,
\begin{equation}
        \mu(\sigma)=n+\jcr(\sigma).
\end{equation}
\end{lemma}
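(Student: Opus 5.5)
We prove the two inequalities $\mu(\sigma)\le n+\jcr(\sigma)$ and $\mu(\sigma)\ge n+\jcr(\sigma)$ separately.

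\emph{Meander to Jordan realization ($\ge$).} Take a cyclic meandric permutation $\tau$ on $[N]$ with $N=\mu(\sigma)$ whose first-return permutation on an $n$-element subset $S$ is $\sigma$. Let $A$ be the $x$-axis compactified at infinity in $\mathbb{S}^2$, oriented left to right, with the points $0,1,\ldots,N-1$ placed on it in order. The two alternating matchings of $\tau$ are non-crossing, so we draw the first as nested semicircles in the upper half-plane and the second in the lower half-plane; both can be taken PL. Their union is a single closed curve $B$ visiting $\tau_0,\tau_1,\ldots$ in order. It is embedded because arcs on the same side do not cross and arcs on opposite sides meet only on $A$. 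It crosses $A$ transversely, because each point is the endpoint of one upper and one lower arc. Hence $|A\cap B|=N$. Distinguish $S$: its cyclic order along $A$ is the standard order after relabelling, and its order along $B$ is the first-return permutation $\sigma$. Thus $(A,B,S)$ is a PL Jordan realization, and $\jcr(\sigma)\le N-n$.

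\emph{Jordan realization to meander ($\le$).} Take a minimizing realization $(A,B,S)$ with $|A\cap B|=n+\jcr(\sigma)$. By the Schoenflies theorem (PL version) there is a PL homeomorphism of $\mathbb{S}^2$ carrying $A$ to the compactified $x$-axis, preserving orientation. The point at infinity can be placed on $A$ away from $A\cap B$. Then $B$ becomes a Jordan curve in the plane meeting the $x$-axis transversely in $M=|A\cap B|$ points. Label these points $0,\ldots,M-1$ from left to right; this is a linear order compatible with the cyclic order on $A$, cut at infinity. Reading along $B$ gives a cyclic permutation $\tau$ of $[M]$.

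We next check that $\tau$ is meandric. The curve $B$ alternates between the upper and lower half-planes at each transverse crossing. Its arcs in the upper half-plane are disjoint, so their endpoint pairs form a non-crossing matching: two crossing chords with interleaved endpoints would have to intersect by the Jordan curve theorem in the half-disk. The same holds in the lower half-plane. Transversality also forces $M$ to be even.

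Finally, the first-return permutation of $\tau$ on $S$ is $\sigma$, up to a relabelling issue. Cutting $A$ at infinity gives a linear order of $S$ that is a rotation of the chosen labelling. We handle this by placing infinity on $A$ just before the point labelled $0$ of $S$, between it and its predecessor in $A\cap B$. With that choice the increasing relabelling of $S$ agrees exactly with the labels $0,\ldots,n-1$. Hence $\mu(\sigma)\le M=n+\jcr(\sigma)$.

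\emph{Main obstacle.} The key step is the normalization $A\mapsto$ axis in the PL category while keeping transversality and finiteness of intersections. This requires the PL Schoenflies theorem in $\mathbb{S}^2$, applied with care so that the point at infinity avoids $B$. Once that is in place, the remaining checks are routine.
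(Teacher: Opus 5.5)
Your proof is correct and follows essentially the same two-direction argument as the paper. In one direction you draw the two non-crossing alternating matchings on opposite sides of $A$ to form $B$. In the other you read off a meandric permutation from the alternating arcs of $B$ in the two complementary discs of $A$. Your explicit choice of the cut point on $A$ (just before the $S$-point labelled $0$) makes precise a labelling normalization the paper leaves implicit. Also, only the topological Jordan/Schoenflies fact about disjoint arcs in a disc is really needed, not the PL version, since the meandric condition is purely combinatorial.
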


\begin{proof}
We prove that the possible excesses $N-n$ coming from a cyclic meander are exactly the possible excesses $|A\cap B|-n$ coming from
PL Jordan realizations.
\\
\\
First suppose that $\sigma$ is a cyclic meandric permutation on $[N]$ inducing
$\sigma$ on a subset $S\subset [N]$.  Place the points $0,1,\ldots,N-1$ in order on an oriented circle $A\subset \mathbb{S}^2$.  Draw the first alternating matching by pairwise disjoint arcs in one complementary disc of $A$, and draw the second alternating matching by pairwise disjoint arcs in the other complementary disc.  The union of these arcs is an embedded connected  $2$-valent  graph: abstractly, it is the cycle $(\sigma_0,\sigma_1,\ldots,\sigma_{N-1},\sigma_0)$. Near each point of $A$, one incident arc lies on one side of $A$ and the other incident arc lies on the other side.  Thus, after an arbitrarily small local smoothing at the $N$ points of $A$, the union becomes a piecewise-linear Jordan curve $B$ crossing $A$ transversely at precisely those $N$ points.  The distinguished intersections are the points of $S$.  Their order along $A$, after increasing relabelling of $S$, is $0,1,\ldots,n-1$, and their first-return order along $B$ is $\sigma$. Thus every meandric permutation with $N$ letters gives a PL Jordan realization with $N-n$ unmarked intersections.
\\
\\
Conversely, let $(A,B,S)$ be a PL Jordan realization and let $N=|A\cap B|$. The curve $A$ separates $\mathbb{S}^2$ into two discs.  Since $B$ crosses $A$ transversely whenever it meets $A$, the successive arcs of $B$ alternate between the two complementary discs.  Thus $N$ is even as the curves are closed.  Arcs lying in the same disc are pairwise disjoint because $B$ is embedded.  Therefore each of the two alternating matchings determined by $\sigma$ are non-crossing as defined in Section \ref{sec:introduction}. Label all points of $A\cap B$ by $0,1,\ldots,N-1$ in their cyclic order along $A$, and let $\sigma=(\sigma_0,\ldots,\sigma_{N-1})$ be their cyclic order along $B$. The permutation $\sigma$ is cyclic meandric, and its first-return permutation on the labels corresponding to $S$ is $\sigma$.
\\
\\
The two constructions preserve the excess $N-n$ exactly and taking minima gives the
claim.
\end{proof}

In light of Lemma \ref{lem:exact}, we may think about this problem as drawing two Jordan curves intersecting transversally in the plane at a set of prescribed points in a prescribed order and by adding as few additional intersections as possible.  This naturally leads us to think of the associated cycle-pair multigraph $G_{\sigma}$ as in Definition \ref{def:cpmg} coming from the permutation $\sigma$ in question.   \\
\\
In what follows, we assume that all drawings of a multigraph have vertices that are distinct points, edges are simple arcs, no edge passes through a non-incident vertex, crossings are transverse interior intersections of two distinct edges, and, there are no triple crossings.  Parallel edges are allowed and are drawn as distinct arcs.  We assume these hypotheses are always satisfied for any drawing of a multigraph.   \\
\\
Let $G$ be a multigraph.  The \emph{crossing number} $\crn(G)$ is the minimum number of crossings among all such drawings of $G$.  Crossings at common vertices $G$ are not counted. That is to say, edges may meet at their shared endpoints, and all other intersections are counted as crossings.  With these assumptions established, we are now able to prove our next proposition which relates the Jordan crossing number of a permutation to the crossing number of a permutations associated cycle-pair graph $G_{\sigma}$.  

\begin{lemma}\label{lem:cr-lower}
For every cyclic permutation $\sigma$ of $[n]$, with $n\ge 2$,
\begin{equation*}
        \crn(G_{\sigma}) = \jcr(\sigma)=\mu(\sigma)-n.
\end{equation*}
\end{lemma}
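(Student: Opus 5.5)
The second equality $\jcr(\sigma)=\mu(\sigma)-n$ is exactly Lemma~\ref{lem:exact}, so it remains to prove $\crn(G_\sigma)=\jcr(\sigma)$. I would prove the two inequalities separately. The inequality $\crn(G_\sigma)\le\jcr(\sigma)$ is routine. The reverse inequality needs a cleanup of an optimal drawing, and that is where I expect the work to be.

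\emph{Step 1: $\crn(G_\sigma)\le \jcr(\sigma)$.}
\begin{enumerate}
\item Take a PL Jordan realization $(A,B,S)$ attaining $\jcr(\sigma)$ and use the points of $S$ as the vertices.
\item The $n$ subarcs of $A$ between cyclically consecutive marked points are drawn edges $\{i,i+1\}$ of $C_0$. The $n$ subarcs of $B$ between consecutive marked points are the edges $\{\sigma_j,\sigma_{j+1}\}$ of $C_\sigma$.
\item Edges that coincide abstractly, including the degenerate case $n=2$, are parallel edges and are drawn as distinct arcs, one on $A$ and one on $B$.
\item Two $A$-arcs meet only at shared endpoints because $A$ is embedded, and likewise for $B$. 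No arc passes through a non-incident vertex, since marked points occur on arcs only as endpoints.
\item Every remaining intersection is an unmarked point of $A\cap B$, which is a transverse crossing of an $A$-edge with a $B$-edge.
\end{enumerate}
Hence this is a legal drawing of $G_\sigma$ with exactly $|A\cap B|-n$ crossings.

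\emph{Step 2: $\jcr(\sigma)\le\crn(G_\sigma)$.} Take a drawing $D$ of $G_\sigma$ with $\crn(G_\sigma)$ crossings, chosen among optimal drawings to minimize the number of \emph{monochromatic} crossings. A monochromatic crossing is one between two $C_0$-edges or between two $C_\sigma$-edges. The union of the $C_0$-edges is a closed curve $A'$ through the vertices in the order $0,1,\dots,n-1$, and the union of the $C_\sigma$-edges is a closed curve $B'$ through them in the order $\sigma$. The curves $A'$ and $B'$ can fail to give a PL Jordan realization in two ways.
\begin{itemize}
\item[(a)] $A'$ or $B'$ may have self-crossings.
\item[(b)] At a vertex, the rotation of the four incident edge-ends may be $A,A,B,B$ rather than $A,B,A,B$. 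Then $A'$ and $B'$ touch at that vertex instead of crossing transversely.
\end{itemize}

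For (a), crossings between adjacent edges of one cycle are removed by the standard swap at the crossing nearest the common vertex, which strictly lowers the total crossing count. For two independent edges $e,f$ of $A'$ crossing at $x$, I would first try the rerouting trick. Compare the crossings on the two subarcs of $e$ and $f$ on either side of $x$. Then redraw one edge so that it follows the cheaper side of the other edge, in a thin neighbourhood, to just past $x$. This never increases the number of crossings with $B'$ and removes the crossing at $x$, contradicting minimality. If this fails, I would smooth the self-crossing and use a small extremality argument on an innermost monogon of $A'$ that contains no vertex.

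For (b), note that at a tangential vertex the two $B$-ends lie in a single complementary side of $A'$ locally. I would push the vertex slightly across one of the incident $A$-edges along a short arc. This converts the touching into a transverse crossing, and I would check, using optimality, that it costs no extra crossing. If it does cost one, I would instead reroute one $B$-end around the vertex and charge the new crossing against an existing one.

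After these modifications $A'$ and $B'$ are Jordan curves meeting transversely, with the marked vertices in the correct cyclic orders. Their $\crn(G_\sigma)$ crossings become the unmarked intersection points, and PL approximation preserves all of this. This gives $\jcr(\sigma)\le\crn(G_\sigma)$.

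\textbf{Main obstacle.} The crux is (a) for independent edges, and to a lesser extent (b). Naive smoothing of a self-crossing of $A'$ either disconnects the curve or reverses the order of the vertices on a loop. So the argument must genuinely use crossing-minimality to show that the loops created by self-crossings can be shortcut without adding crossings with $B'$.
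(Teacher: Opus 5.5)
Your Step 1 is correct; it is the paper's first half, written more carefully. Step 2, however, cannot be completed, because the inequality $\jcr(\sigma)\le\crn(G_\sigma)$ is false in general, so the statement as written fails.

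\textbf{Case (b) is a parity obstruction.} Take any drawing of $G_\sigma$. Let $t$ be the number of vertices whose rotation alternates $A,B,A,B$, and let $k$ be the number of bichromatic crossings. Push $B'$ slightly off each touching vertex. This leaves two closed curves in $\mathbb{S}^2$ meeting transversely in $t+k$ points, so $t+k$ is even. A PL Jordan realization has $t=n$, hence $\jcr(\sigma)\equiv n \pmod 2$. It follows that turning one touching vertex into a transverse one changes $k$ by an odd amount. So that move is never free, and there need not be any existing crossing to charge it against.

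\textbf{Counterexamples.}
\begin{itemize}
\item For odd $n$ and $\sigma=(0,1,\ldots,n-1)$, $G_\sigma$ is a doubled $n$-cycle, so $\crn(G_\sigma)=0$. But $\jcr(\sigma)\ge 1$ by the parity above; equivalently, $\mu(\sigma)$ is even, so $\mu(\sigma)\ge n+1$.
\item Parity is not the only problem. For $n=4$ and $\sigma=(0,2,1,3)$, $G_\sigma$ is $K_4$ with two doubled edges, hence planar, so $\crn(G_\sigma)=0$. Yet $\sigma$ is not meandric, since whichever alternating matching contains $\{0,2\}$ also contains $\{1,3\}$. Hence $\mu(\sigma)\ge 6$ and $\jcr(\sigma)\ge 2$.
\end{itemize}

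\textbf{Case (a) also fails as planned.} The exchange argument needs the two edges to share a second point, either a common endpoint or a second crossing. For a single crossing of independent edges there is nothing to exchange. Rerouting one edge around an endpoint of the other adds crossings with the remaining edges at that endpoint, so minimality gives no contradiction.

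\textbf{What survives.} Your Step 1 gives $\crn(G_\sigma)\le\jcr(\sigma)=\mu(\sigma)-n$, and this is the only direction the later sections need. The lower bounds pass through $\crn(G_\sigma)$, while the tile construction bounds $\jcr$ directly. The paper's own proof of the reverse inequality has the same hole. It simply asserts that one ``may assume transversality'' at the vertices and never addresses monochromatic crossings.
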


\begin{proof}
Take a PL Jordan realization $(A,B,S)$ where $n = |S|$.  Note the realization of $(A,B,S)$ has $N$-vertices, $n$ of which are marked and $N-n$ of which are unmarked.  If we simply remove the unmarked vertices from this realization, while retaining the edges, this will give a drawing of $G_{\sigma}$ where the unmarked vertices are precisely the crossings so $\crn(G_{\sigma}) \leq \jcr(\sigma)$.  \\
\\
Conversely, if we have a drawing of $G_{\sigma}$, we may orient the two Hamiltonian cycles $C_{0}$ and $C_{\sigma}$ in accordance with their orderings on $[n]$ and $[n]_{\sigma}$ respectively.  Because there are only finitely many intersections between these cycles, we may assume transversality of the intersections of the drawing of $G_{\sigma}$ at both the vertices of $G_{\sigma}$, and, their crossings.  This will define a collection of finitely many disjoint arcs in the plane meeting at finitely many points corresponding to these intersections.  One may isotope the edges to be unions of line segments in such a manner that none of the edges intersect at new points throughout the isotopy.  Take as $A$ and $B$ the union of edges corresponding to $C_{0}$ and $C_{\sigma}$ respectively.  The vertex set of $G_{\sigma}$ may be taken as $S$, and thus this yields a PL Jordan realization of $(A,B,S)$ where $|A\cap B| - n = \crn(G_{\sigma})$.  Thus $\jcr(\sigma) \leq \crn(G_{\sigma})$ concluding our proof.  
\end{proof}


\section{The upper bound}
\label{sec:upper-bound}

This section describes constructions that provide controlled upper bounds for the PL Jordan crossing number $\jcr$.  We first build what we refer to as permutation tiles as in Lemma~\ref{lem:tile}.  These are rectangular tiles in which two PL arcs realize an arbitrary ordering of a block of integers with a controlled number of intersections.  We then connect these tiles as in Proposition \ref{prop:block-upper}, producing two Jordan curves without creating new intersections. The resulting upper bound on the Jordan crossing number grows quadratically.  We remark that while this upper bound is weaker than the original bound Schwartz provided in Lemma 3.3 \cite{richtpss}, the construction used here are distinct and could find applications in other areas such as tennis racket stringing. 
\begin{lemma}[Permutation tiles]\label{lem:tile}
Let $a$ be a non-negative integer, and $L$ a positive integer.  Define $I=\{a,a+1,\ldots,a+L-1\}\subset\mathbb N$, and let $\sigma=(\sigma_{0},\sigma_{1},\ldots,\sigma_{L-1})$ be an ordering of $I$.  There is a rectangle $Q$ and two properly
embedded piecewise-linear arcs $A_\sigma,B_\sigma\subset Q$
with endpoints on $\partial Q$, meeting transversely, such that: $(i)$ the initial and terminal endpoints $A^-,A^+$ of $A_\sigma$ lie on the top side of
$Q$, with the initial endpoint $A^-$ to the left of the terminal endpoint $A^+$;
$(ii)$ the initial and terminal endpoints $B^-,B^+$ of $B_\sigma$ lie on the bottom side of
$Q$, with the initial endpoint to the left of the terminal endpoint;
$(iii)$ $A_\sigma\cap B_\sigma$ has exactly $L(L+1)$ points;
$(iv)$ $L$ of these points are marked and labelled by the elements of $I$;
$(v)$ the marked points occur along $A_\sigma$ in the order
$(a,a+1,\ldots,a+L-1)$; and
$(vi)$ the marked points occur along $B_\sigma$ in the order $\sigma$. We
call such a tuple $(Q,A_\sigma,B_\sigma)$, together with its marked labelled
intersections, a \emph{permutation tile} for $\sigma$.
\end{lemma}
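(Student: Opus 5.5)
The plan is to build the tile as an explicit grid in which $A_\sigma$ snakes horizontally and $B_\sigma$ snakes vertically. Every horizontal pass of $A_\sigma$ will then cross every vertical pass of $B_\sigma$ exactly once. If there are $r$ horizontal passes and $c$ vertical passes with $\{r,c\}=\{L,L+1\}$, this gives exactly $rc=L(L+1)$ intersection points, which is $(iii)$. The marked points are then chosen so that there is at most one per row and at most one per column, which makes both orders $(v)$ and $(vi)$ easy to read off.

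\emph{Parity choice.} Let $Q$ be a large rectangle containing the grid lines $y=1,\dots,r$ (rows) and $x=1,\dots,c$ (columns), with margins on all sides.
\begin{itemize}
\item $A_\sigma$ starts at a point $A^-$ on the top side, to the left of all columns. It runs straight down in the left margin to the left end of the top row, traverses the rows in boustrophedon fashion, and joins consecutive rows by small turn arcs in the left and right margins.
\item $B_\sigma$ starts at $B^-$ on the bottom side directly below column $1$. It traverses the columns in boustrophedon fashion, joining consecutive columns by small turn arcs in the top and bottom margins.
\end{itemize}
For the endpoints to land correctly we need $r$ odd and $c$ even.
\begin{itemize}
\item If $r$ is odd, $A_\sigma$ finishes at the right end of the bottom row. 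It then runs up the right margin, beyond the rightmost column, to $A^+$ on the top side, to the right of $A^-$.
\item If $c$ is even, $B_\sigma$ finishes at the bottom of the last column. It then goes straight down to $B^+$, to the right of $B^-$.
\end{itemize}
Exactly one of $L$ and $L+1$ is odd. So we take $r=L$, $c=L+1$ when $L$ is odd, and $r=L+1$, $c=L$ when $L$ is even.

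\emph{Main obstacle: no extra intersections.} The step I expect to need the most care is checking that the turn arcs and the four leads create no intersections beyond the $rc$ grid crossings.
\begin{itemize}
\item The turns of $B_\sigma$ lie in the top and bottom margins, strictly between the first and last columns.
\item The turns of $A_\sigma$ lie in the left and right margins, strictly between the first and last rows.
\item The vertical leads of $A_\sigma$ lie to the left of column $1$ and to the right of column $c$, so they miss all of $B_\sigma$.
\item The leads of $B_\sigma$ lie below the bottom row, on the extensions of columns $1$ and $c$, so they miss the side turns and leads of $A_\sigma$.
\end{itemize}
Both arcs are embedded PL arcs, they meet transversely, and they meet only at the $rc$ grid points. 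The arcs are also properly embedded, with endpoints on the correct sides of $\partial Q$ in the correct left/right order, which gives $(i)$ and $(ii)$.

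\emph{Marking.} Index the rows $0,\dots,r-1$ in the order $A_\sigma$ visits them, and the columns $0,\dots,c-1$ in the order $B_\sigma$ visits them. For $k=0,\dots,L-1$, mark the crossing of column $k$ with row $\sigma_k-a$ and label it $\sigma_k$. Since $\sigma$ is a bijection onto $I$, distinct marked points lie in distinct rows and distinct columns, and the one extra row or column is left unmarked.
\begin{itemize}
\item Along $A_\sigma$ the rows are met in order $0,1,\dots$, and each row contains at most one marked point. So the marked points occur in the order $a,a+1,\dots,a+L-1$, which is $(v)$.
\item Along $B_\sigma$ the columns are met in order $0,1,\dots$, and each column contains at most one marked point. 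So the marked points occur in the order $\sigma_0,\sigma_1,\dots,\sigma_{L-1}$, which is $(vi)$.
\end{itemize}
Together with the count of $L(L+1)$ intersection points, this gives $(iv)$ and completes the construction.
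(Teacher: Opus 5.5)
Your construction is correct and is essentially the paper's. In both, $A_\sigma$ snakes horizontally and $B_\sigma$ vertically through a grid of $L(L+1)$ crossings, and one crossing per column is marked in the row indexed by $\sigma_k-a$, so (v) and (vi) are read off along the rows and the columns. The only difference is how you handle parity: you put the $L+1$ lines on whichever side makes the rows odd and the columns even, so both exit leads are straight. (Place the final lead of $A_\sigma$ to the right of its own right-hand turn arcs.) The paper instead always uses $L$ rows and $L+1$ columns; for even $L$ it must route both exits around the other arc's turn arcs, and it only sketches this.
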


\input{figures/permutation-tile}

\begin{proof}
This proof is best understood with an accompanying picture.  Figure \ref{fig:permutation-tile} illustrates the construction.  Pick a sufficiently large rectangle $Q$ in $\mathbb{R}^{2}$ of side lengths say $(2L+2)$ which contains big enough parts of the line segments $y = 0, y = 1, \hdots, y = L$ and $x = a, x = a + 1, \hdots, x = a+L$.  The bottom left corner of $Q$ should be placed in such a way that the point $(a,0)$ is in the interior of $Q$ but close to the bottom left corner of $Q$.  \\
\\
Starting along the $x$-axis, $y=0$, label the points $(a,0), (a+1,0), (a+2,0),\hdots, (a+L-1,0), (a+L,0)$ by $0,1,2,\hdots, L-1,L$ in this order, i.e. from left to right.  Go up one unit to the line $y = 1$ and label the points $(a+L,1), (a+L-1,1), \hdots, (a+2,1), (a+1,1), (a,1)$ by $L+1, L+2, L+3, \hdots, 2L, 2L+1$ in this order, i.e. from right to left.  Go up one unit to the line $y = 2$ and label the points $(a,2), (a+1,2), (a+2,2),\hdots, (a+L-1,2), (a+L,2)$ by $2L+2, 2L+3, 2L+4,\hdots, 3L+1, 3L+2$ in this order, i.e. from left to right.  Continue this process in an alternating manner going through all the lines $y = 0, y = 1, \hdots , y = L-1$.  This forms a grid of $L(L+1)$ points inside the interior of $Q$.  We now construct a path of line segments going through each point in specific orders.  \\
\\
To construct the arc $A_{\sigma}$, start at a point $A^{-}$ on the top edge of $Q$ that is to the left of $x = a$.  Draw a line segment vertically down from this point to the $x$-axis $y = 0$.  This point of intersection will lie to the left of $(a,0)$.  Draw the line segment from this point of intersection to a point to the right of $(a+L,0)$ in the interior of $Q$.  Now draw the vertical line segment from this point to the line $y = 1$.  The point of intersection will lie to the right of $(a+L,1)$.  Draw the line segment from this point of intersection to a point to the left of $(a,1)$ in the interior of $Q$.  Continue this process snaking through interior of $Q$ until you run through all the $L(L+1)$ points in order.  This last segment will produce a point on the line $y = L-1$ either to the right or left of the last point corresponding to $L(L+1)$ depending on the parity of $L$.  Complete the arc by going to a point on the top edge of $Q$ that is to the right of $x = a+L$ via vertical and horizontal line segments.  This defines $A_{\sigma}$ which enters from the top left of $Q$, snakes through the $L(L+1)$ points $\{0,1,2,\hdots, L(L+1)-1\}$ in this order, horizontally, and exits at a point $A^{+}$ close to the top right of $Q$.  \\
\\
To construct the arc $B_{\sigma}$, we start at a point $B^{-}$ on the bottom edge of $Q$ that is to the left of $x = a$.  Draw a small upward vertical line segment from $B^{-}$, then a horizontal line segment to the right until you hit the line $x = a$.  Draw the line segment from this point of intersection to a point above $(a,L-1)$ in the interior of $Q$.  Now draw a horizontal line segment from this point to the line $x = a+1$.  The point of intersection will lie above $(a+1,L-1)$.  Draw the line segment from this point of intersection to a point below $(a+1,0)$ in the interior of $Q$.  Continue in this process snaking through the interior of $Q$ until you run through all the $L(L-1)$ points.  In this order they will appear as $(a,0), (a,1), \hdots, (a,L-1)$, then $(a+1,L-1), (a+1,L-2), \hdots, (a+1,0)$, so on and so forth.  The last segment will produce a point on the line $x = L$ either above $(a+L,L-1)$ or below $(a+L,0)$ depending on the parity of $L$.  Complete the arc by going to a point on the bottom edge of $Q$ that is to the right of $x = a+L$ via vertical and horizontal line segments disjoint from $A_{\sigma}$.  This defines $B_{\sigma}$ which enters from the bottom of $Q$ and snakes through the $L(L+1)$ points, vertically, and exits at a point $B^{-}$ close to the bottom right of $Q$.  Figure \ref{fig:permutation-tile} provides a picture of $A_{\sigma}$ and $B_{\sigma}$ for $L = 4$.  \\
\\
By construction, properties $(i) - (iii)$ of the lemma are satisfied.  We now pick from the $L(L+1)$ points of $A_{\sigma} \cap B_{\sigma}$ a subset to represent the interval $I = \{a,a+1,\hdots, a+L-1\}$.  To this end, on the line $x = a$, reading from bottom to top, pick the $\sigma_{0}$-th vertex from the $L$-vertices on the line $x=a$.  For $x=a+1$, again reading from bottom to top, pick the $\sigma_{1}$-th vertex from the $L$ vertices on the line $x = a+1$.  Continue in this fashion for each $i$ between $0$ and $L$.  This selects the subset $S \subset [L(L+1)]$ of $L$-marked points from $A_{\sigma} \cap B_{\sigma}$.  \\
\\
We label the marked points $S$ by elements of $I$ by the following convention.  Follow the path $A_{\sigma}$ from $A^{-}$ to $A^{+}$.  The first element of $S$ that is seen is labeled by $a$.  The second element seen is labeled by $a+1$.  Continue in this fashion.  Notice this labeling is precisely by height reading from bottom to top.  That is to say, $a$ is the unique element of $S$ lying on the line $y = 0$, and $a+1$ is the unique element of $S$ lying on the line $y = 1$, so on and so forth.  By construction, the set of marked points $S$ along $A_{\sigma}$ occur in the order $(a,a+1,\hdots, a+L-1)$.  \\
\\
On the other hand, consider what happens when we traverse $B_{\sigma}$.  The first line segment will be directed upwards at $x = a$.  By construction, the corresponding point of $S$ is the $\sigma_{0}$-th vertex which is $\sigma_{0} \in S$.  The second line segment will be directed downwards at $x = a+1$.  By construction, the corresponding point of $S$ is the $\sigma_{1}$-th vertex which is $\sigma_{1} \in S$.  Continue in this fashion.  This means the marked points $S$ occur along $B_{\sigma}$ in the order of $\sigma$.  Figure \ref{fig:permutation-tile} provides a picture of this construction.  Note reading left to right, the set of marked points are chosen at the height $y = 1, y= 2$ and $y = 0$.  Traveling along $A_{\sigma}$, we index the marked green points by $\{0,1,2\}$, and along $B_{\sigma}$ these points occur in the order $(1,2,0)$.  This concludes the proof.
\end{proof}

Next we explain how connect several permutation tiles together to obtain a pair of Jordan curves that intersect in a controlled behavior.  To do so, we first introduce some notation.  Let $[n] = I_{1} \sqcup \hdots \sqcup I_{q}$ be a partition of $[n] = \{0,1,\hdots, n-1\}$ into $q$ disjoint consecutive intervals of $[n]_{0}$ where $I_{1} < I_{2} < \hdots < I_{q}$ and $L_{i} := |I_{i}| \geq 1$ for each $1 \leq i \leq q$.  Choose a linear ordering on each of the $I_{i}$'s and concatenate them together according to the order $I_{1} <  I_{2} < \hdots < I_{q}$ to get a linear ordering on all of $[n]$.  We denote the resulting linear order on $[n]$ by $\sigma$.  \\
\\
Each linear ordering $\sigma_{i}$ on $I_{i}$ determines a cyclic permutation by taking each element to its immediate successor under the order of $I_{i}$.  That is, $\sigma_{i}$ is the permutation induced by $(\sigma_{i}(a_{i}), \sigma_{i}(a_{i}+1) , \hdots ,\sigma_{i}(a_{i}+L_{i}-1))$ where $I_{i} = \{a_{i},a_{i}+1,\hdots, a_{i}+L_{i}-1\}$.  We take care here to remember the base-point $a_{i}$ of each $I_{i}$.  By concatenating all of the linear orderings $\sigma_{i}$ together, we then determine a cyclic permutation on $[n]$ which we abusively denote by $\sigma$.  The permutation has the property that it runs through the intervals $I_{i}$ in the cyclic order determined by $i = 1, 2, \hdots, q$.  We call a cyclic permutation that arises from this construction a \emph{cyclic interval permutation}.  We now provide a topological realization of this process in terms of the tiles.

\begin{proposition}[Block wiring upper bound]\label{prop:block-upper}
Let
\begin{equation*}
        [n]=I_1\sqcup I_2\sqcup\cdots\sqcup I_{q}
\end{equation*}
be a partition into consecutive intervals, with $L_i=|I_i| \geq 1$.  Let $\sigma_{i}$ be a linear ordering for each $I_{i}$, and let $\sigma$ denote the cyclic permutation obtained from the linear ordering on $[n]$ by concatenating each of the orderings $I_{1} < I_{2} < \hdots < I_{q}$.  We then have the following upper bound on the Jordan crossing number of $\sigma$.
\begin{equation*}\label{eq:jcr_squarebnd}
        \jcr(\sigma)\le \sum_{i=1}^{q} L_i^2.
\end{equation*}
\end{proposition}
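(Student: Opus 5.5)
Take one permutation tile $(Q_i,A_{\sigma_i},B_{\sigma_i})$ for each block $I_i$, as provided by Lemma~\ref{lem:tile}. Each tile has exactly $L_i(L_i+1)$ intersection points, and $L_i$ of them are marked. So tile $i$ contributes $L_i(L_i+1)-L_i=L_i^2$ unmarked intersections. The plan is to place the tiles $Q_1,\ldots,Q_q$ side by side, left to right, as disjoint rectangles along a horizontal strip. The arcs are then joined by connecting paths that introduce no new intersections. This produces a PL Jordan realization $(A,B,S)$ of $\sigma$ with $|A\cap B|-n=\sum_i L_i^2$, which gives $\jcr(\sigma)\le\sum_i L_i^2$.

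The joining runs as follows. All $A$-endpoints lie on the top sides of the tiles, with $A^-$ to the left of $A^+$. For each $i<q$, connect $A_i^+$ to $A_{i+1}^-$ by a short arc running above the tiles between $Q_i$ and $Q_{i+1}$. Then close the curve by joining $A_q^+$ to $A_1^-$ with one large arc passing above all the connectors. Do the same for $B$ below the strip: connect $B_i^+$ to $B_{i+1}^-$, and close with a large arc from $B_q^+$ to $B_1^-$ underneath everything. The $A$-connectors stay in the open half-plane above the strip, and the $B$-connectors stay in the half-plane below it. The short connectors are nested disjointly from the big closing arc. Hence no connector meets another connector or any tile arc, except at the endpoints where they are glued. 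So $A$ and $B$ are PL Jordan curves, and $A\cap B$ is exactly the union of the tile intersections, with marked set $S=\bigsqcup_i S_i$.

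Now check the orders. Along $A$ we traverse tile $1$, then tile $2$, and so on. Within tile $i$ the marked points appear in the order $a_i,a_i+1,\ldots,a_i+L_i-1$ by property $(v)$. Since $I_1<\cdots<I_q$ are consecutive intervals, the cyclic order along $A$ is $0,1,\ldots,n-1$. Along $B$ we likewise traverse the tiles in order $1,\ldots,q$, and within tile $i$ the marked points appear in the order $\sigma_i$ by property $(vi)$. Concatenating gives the cyclic order $\sigma$. Orientations are consistent because each tile's arcs are entered at the left endpoint and exited at the right one.

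The main point needing care is the disjointness of the connectors from the tiles' interiors. This requires that the tile arcs reach $\partial Q_i$ only at their endpoints; Lemma~\ref{lem:tile} guarantees this by asserting they are properly embedded. It also requires that the top side of each $Q_i$ carries only $A$-endpoints and the bottom side only $B$-endpoints, which is exactly what properties $(i)$ and $(ii)$ state. A planarity argument finishes it: the outer region above the row of rectangles is a disc, and the points $A_1^-,A_1^+,\ldots,A_q^-,A_q^+$ lie in this linear order on its boundary. The pairing $(A_i^+,A_{i+1}^-)$ together with $(A_q^+,A_1^-)$ is non-crossing with respect to this order, so it can be drawn by disjoint PL arcs; the same holds below for $B$.
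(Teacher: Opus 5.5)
Your proposal is correct and follows the paper's proof. You take one permutation tile per block from Lemma~\ref{lem:tile}, place the tiles left to right, and route the $A$-connectors in an upper corridor and the $B$-connectors in a lower corridor so that no new intersections arise (the paper's Figure~\ref{fig:corridor}); each tile then contributes $L_i(L_i+1)-L_i=L_i^2$ unmarked crossings, and your non-crossing-matching argument for the connectors just makes explicit what the paper conveys through that figure.
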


\begin{proof}
For each interval $I_{i}$, apply Lemma~\ref{lem:tile} with $(I_{i},\sigma_{i})$ to obtain tiles $Q_{i}$ and pairs of arcs $(A_{\sigma_{i}}, B_{\sigma_{i}})$ for $1 \leq i \leq q$ which satisfy the conclusions of Lemma~\ref{lem:tile}.  In particular, each tile $Q_{i}$ contains $L_{i}(L_{i}+1)$ points, $L_{i}$ of which are marked and $L_{i}^{2}$ of which are unmarked.  \\
\\
Place these tiles in the plane from left to right in the order determined by $I_{1} < \hdots < I_{q}$ with no overlap and translated if necessary.  Connect the tiles together as according to Figure \ref{fig:corridor}.  Denote the resulting cycles by the connecting $A_{\sigma_{i}}$'s and $B_{\sigma_{i}}$'s by $A$ and $B$ respectively.  This supplies us with a pair of PL Jordan curves intersecting at $\sum_{i=1}^{q} L_{i}(L_{i}+1)$ with precisely $\sum_{i=1}^{q} L_{i} = n$ marked points.  By construction, the pair $(A,B)$ is a PL realization of $\sigma$ whose distinguished marked subset is the union of the marked subsets of each of the tiles from the $Q_{i}$.  Along $A$ these points occur in the natural order, and along $B$, they occur in the order of $\sigma$.  Because the number of marked points is precisely $n$, the number of unmarked points is equal to $\sum_{i=1}^{q} L_{i}^{2}$.  Consequently the Jordan crossing number of $\sigma$ is bounded above as in Equation \ref{eq:jcr_squarebnd}.  
\end{proof}

\input{figures/corridor}

Proposition \ref{prop:block-upper} provides us with an upper bound the on the complexity of the Jordan crossing number of a cyclic permutation $\sigma$ which arises as a concatenation of smaller cyclic permutations.  The proposition shows that if one creates such a permutation in this manner, the Jordan crossing number grows at most as a sum of quadratics in the length of each subinterval, and one can create permutations of $[n]$ where the complexity of the full permutation is spread out across the smaller blocks.  The case of main interest to us is in the case of a single block and we conclude this section by addressing a case of this explicitly.  We note however as mentioned before Schwartz provides a more refined construction that gives the tighter bound of $\mu_{n}$, as in Definition \ref{def:meander_number}, and shows it grows at most approximately as $n^{2}/2$.  

\begin{corollary}\label{cor:global-upper}
For every cyclic permutation $\sigma$ of $[n]$,
\begin{equation*}
        \jcr(\sigma)\le n^2.
\end{equation*}
Consequently
\begin{equation*}
        \mu_{n} \le n^2+n.
\end{equation*}
\end{corollary}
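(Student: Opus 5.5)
The plan is to specialize Proposition~\ref{prop:block-upper} to a single block, $q=1$ and $I_1=[n]$, so that $L_1=n$. Every cyclic permutation $\sigma$ of $[n]$ comes from this degenerate case of the block construction. Write its cycle notation as $\sigma=(\sigma_0,\sigma_1,\ldots,\sigma_{n-1})$, a linear ordering of $I_1=[n]$. The cyclic permutation that Proposition~\ref{prop:block-upper} attaches to this single linear ordering sends each entry to its successor, with the last entry sent back to the first. That is exactly $\sigma$.

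Proposition~\ref{prop:block-upper} then gives $\jcr(\sigma)\le L_1^2=n^2$. Concretely, Lemma~\ref{lem:tile} with $a=0$ and $L=n$ produces two arcs meeting in $n(n+1)$ points, $n$ of them marked. Closing the arcs up as in Figure~\ref{fig:corridor} yields a PL Jordan realization with $n^2$ unmarked intersections.

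For the second claim, apply Lemma~\ref{lem:exact} to get $\mu(\sigma)=n+\jcr(\sigma)\le n^2+n$. Taking the maximum over all cyclic permutations $\sigma$ of $[n]$ gives $\mu_n\le n^2+n$.

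The only point needing care is the closing-up step for a single tile. The endpoints $A^\pm$ lie on the top side of $Q$ and $B^\pm$ on the bottom side. We join $A^-$ to $A^+$ by an arc running over the top of $Q$ outside $Q$, and $B^-$ to $B^+$ by an arc running under the bottom of $Q$ outside $Q$. These two exterior arcs lie in disjoint half-planes, so they meet neither each other nor the other tile arc. Hence no new intersections appear, the curves $A$ and $B$ are simple closed curves, and the orientations give the required cyclic orders. This is the $q=1$ instance of the corridor construction, so it is already covered by Proposition~\ref{prop:block-upper}.
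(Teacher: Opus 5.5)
Your proposal is correct and follows the paper's proof: you specialize Proposition~\ref{prop:block-upper} to the single block $I_1=[n]$ to get $\jcr(\sigma)\le n^2$, then apply Lemma~\ref{lem:exact} and maximize over $\sigma$. Your explicit check of the $q=1$ closing-up, with the $A$-arc over the top of $Q$ and the $B$-arc under the bottom, just spells out the one-tile case of the corridor construction that the paper cites.
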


\begin{proof}
Apply Proposition~\ref{prop:block-upper} with the single block $I_1=[n]$,
then use Lemma~\ref{lem:exact}.
\end{proof}

\section{The lower bound}
\label{sec:lower-bound}

In the previous section we established an upper bound on the Jordan crossing number $\jcr(\sigma)$ for a cyclic permutation $\sigma$.  It is the purpose of this section to provide a lower bound.  The methods used are probabilistic and should be thought of addressing the case for $\sigma$ permuting a large number of elements.  We provide an outline of the contents of this section before proceeding.  \\
\\
Recall for a cyclic permutation $\sigma$, we have the cycle-pair multigraph $G_{\sigma}$ as in Definition \ref{def:cpmg}.  Lemma \ref{lem:cr-lower} says the Jordan crossing number of the permutation $\sigma$ is equal to the crossing number of the multigraph $G_{\sigma}$, and therefore it suffices to bound the latter quantity instead.  We first reduce the crossing number of $G_{\sigma}$ to that of a smaller sub-multigraph as in Definition \ref{def:ppmg} allowing us to work with Hamiltonian \emph{paths} instead of \emph{cycles} as in Proposition \ref{prop:block}.  The difference between this is more or less the difference between linear orderings of $[n]$ and cyclic orderings of $[n]$, the former of which is preferable as the counting arguments are more easily stated in the linear case.
\\
\\
We then bound this crossing number from below in terms of its \emph{edge bisection width} as in Lemma \ref{lem:bisection-crossing}.  Informally, the edge bisection width is a measurement of how much the multigraph is `expanding' in some sense.  We then show through a graph theoretic argument, whose techniques are owed to Lipton-Tarjan, that if a graph has a large edge bisection width, then it must have a large crossing number.  This is the content of Lemma~\ref{lem:bisection-crossing}.  \\
\\
To bound the crossing number below quadratically, it then suffices to show that there are permutations that have large edge bisection width in their associated multigraph.  Our methods are non-constructive and rely on counting estimates which are addressed throughout several lemmas in this section, and ultimately resulting in Theorem \ref{thm:random-path}.  In some sense, these counting arguments can be boiled down to the following observation.  If you take an interval in $[n]_{0}$ of length about $n/3$, and apply a random permutation to it, the odds that it is still an interval in $[n]_{0}$ are very small.  To begin, we introduce a definition that is the path-analogue of Definition \ref{def:cpmg} for a permutation $\sigma$.  

\input{figures/scrambled}

\begin{definition}[Path-pair graph]\label{def:ppmg}
For a cyclic permutation $\sigma$ of $[n]$ with $n \geq 2$, the \emph{path-pair multigraph} $H_{\sigma}$ is the multigraph on the vertex set $[n]$ formed as the union of the two Hamiltonian paths: the natural path 
\begin{equation*}
        P_0=(0,1,\ldots,n-1),
\end{equation*}
and the \emph{$\sigma$-path}
\begin{equation*}
        P_\sigma=(\sigma_0,\sigma_1,\ldots,\sigma_{n-1}).
\end{equation*}
Note that $H_{\sigma}$ is obtained from the cycle-pair multigraph $G_{\sigma}$, but with two edges removed.  Thus $H_{\sigma}$ is $4$-valent except at most $4$-points.  Some examples are provided in Figure \ref{fig:scrambled}.  
\end{definition}

For a partition of $[n]$ into consecutive intervals, $[n] = I_{1} \sqcup \hdots \sqcup I_{q}$, and linear orderings on each $I_{i}$ with associated permutations $\sigma_{i}$, we can define a collection of sub-path-pair multigraphs $H_{\sigma_{i}}$ of the cycle-pair multigraph $G_{\sigma}$ where $\sigma$ is the permutation of $[n]$ induced by concatenating the linear orderings together as was done in the paragraph before Proposition \ref{prop:block-upper}.  In this context, we are able to bound the crossing number of $G_{\sigma}$ below by the sum of the crossing numbers of the various $H_{\sigma_{i}}$ as seen in the following proposition.  

\begin{proposition}[Block Crossing Bounds]\label{prop:block}
Let $[n] = I_{1} \sqcup \hdots \sqcup I_{q}$ be a partition into consecutive intervals of length $L_{i} \geq 2$ and $\sigma_{i}$ linear orderings on each $I_{i}$.  Form the various path-pair multigraphs $H_{\sigma_{i}}$ and the linear ordering $\sigma$ on $[n]$ by concatenating each of the linear orderings $I_{i}$ together.  Denote the resulting permutation of $[n]$ by $\sigma$ as well.  We then have the following inequalities below.
\begin{equation*}
\sum_{i=1}^{q} \crn(H_{\sigma_{i}}) \leq \crn(G_{\sigma}) \leq \sum_{i=1}^{q} L_{i}^{2}
\end{equation*}
\end{proposition}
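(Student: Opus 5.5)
The upper bound is already available. By Lemma~\ref{lem:cr-lower} we have $\crn(G_{\sigma})=\jcr(\sigma)$, and Proposition~\ref{prop:block-upper} applied to the same partition and the same orderings $\sigma_i$ gives $\jcr(\sigma)\le\sum_i L_i^2$. So the real content is the lower bound. The plan is to take an optimal drawing $D$ of $G_{\sigma}$ and find inside it $q$ drawings, one of each $H_{\sigma_i}$, that are pairwise ``crossing-disjoint''. By this we mean that no crossing of $D$ is charged to two different blocks.

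\textbf{Identifying the subgraphs.} First check that each $H_{\sigma_i}$ is a subgraph of $G_{\sigma}$ on the vertex set $I_i$. The natural path on $I_i=\{a_i,\dots,a_i+L_i-1\}$ consists of consecutive edges of $C_0$. Because $\sigma$ lists the blocks consecutively, the $\sigma_i$-path consists of consecutive edges of $C_\sigma$. After relabelling $I_i$ increasingly, this subgraph is exactly $H_{\sigma_i}$. For distinct $i$, these subgraphs use disjoint edge sets, since each of their edges has both endpoints in a single block. The edges of $G_{\sigma}$ not used by any block are the $2q$ ``connector'' edges: $q$ in $C_0$ joining $I_i$ to $I_{i+1}$, and $q$ in $C_\sigma$ joining the last $\sigma$-letter of $I_i$ to the first of $I_{i+1}$.

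\textbf{Counting crossings.} Restricting $D$ to the edges of $H_{\sigma_i}$ gives a drawing of $H_{\sigma_i}$. Hence the number $c_i$ of crossings of $D$ between two edges both in $H_{\sigma_i}$ satisfies $c_i\ge\crn(H_{\sigma_i})$. The edge sets are disjoint, so each crossing of $D$ is counted in at most one $c_i$: a crossing involves two edges, and it is counted for block $i$ only if both edges lie in $H_{\sigma_i}$. Summing,
\[
\sum_{i=1}^q \crn(H_{\sigma_i})\le \sum_{i=1}^q c_i\le \crn(D)=\crn(G_{\sigma}),
\]
and Lemma~\ref{lem:cr-lower} converts this to a statement about $\jcr(\sigma)$ if desired.

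\textbf{Expected obstacle.} The only delicate point is bookkeeping. One must verify that the restriction of the drawing is a legitimate drawing in the sense fixed before Lemma~\ref{lem:cr-lower}. This holds because it inherits simple arcs, transverse crossings, and no triple points. One must also check that crossings between edges of different blocks, or involving connector edges, are simply discarded rather than double counted. A second subtlety is the hypothesis $L_i\ge2$. It ensures each $H_{\sigma_i}$ is defined as in Definition~\ref{def:ppmg}; for $L_i=1$ the subgraph is a single vertex with crossing number $0$ anyway. Finally, one should note that $H_{\sigma_i}$ may contain parallel edges, namely when a natural-path edge coincides with a $\sigma_i$-path edge. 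These parallel edges are distinct edges of $G_{\sigma}$ as well, so the identification of $H_{\sigma_i}$ with a subgraph of $G_{\sigma}$ remains valid as a multigraph.
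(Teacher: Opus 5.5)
Your proposal is correct and follows the paper's proof. The upper bound is Proposition~\ref{prop:block-upper}, and you rightly make explicit the passage $\crn(G_{\sigma})=\jcr(\sigma)$ through Lemma~\ref{lem:cr-lower}, which the paper leaves implicit. The lower bound is obtained, as in the paper, by restricting an optimal drawing of $G_{\sigma}$ to the pairwise edge-disjoint submultigraphs $H_{\sigma_i}$, so that each crossing is counted for at most one block.
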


\begin{proof}
The upper bound is Proposition~\ref{prop:block-upper}.  For the lower bound, observe that $G_{\sigma}$ contains the multigraphs $H_{\sigma_{j}}$ as pairwise edge-disjoint submultigraphs.  In any drawing of $G_{\sigma}$, the restriction to the edges of $H_{\sigma_{j}}$ is a drawing of $H_{\sigma_{j}}$.  Therefore the number of crossings whose two participating edges both lie in $H_{\sigma_{j}}$ is at least $\crn(H_{\sigma_{j}})$.  These internal crossing pairs are disjoint for different $j$, because the $H_{\sigma_{j}}$'s are edge-disjoint as edge multisets.  Hence
\begin{equation*}
        \sum_{j=1}^q \crn(H_{\sigma_{j}}) \leq \crn G_{\sigma}  
\end{equation*}
\end{proof}

For a finite multigraph $H$ and $U\subseteq V(H)$, let $\partial_H U$ be the multiset of edges with one endpoint in $U$ and one endpoint outside $U$.  The size of $\partial_H U$ gives us a measurement of how `connected' $U$ is in $H$ with smaller values being more `connected'.  We say a set of vertices $U \subset V(H)$ is called \emph{balanced} if $|V(H)|/3 \leq |U| \leq 2|V(H)|/3$, that is to say, $U$ `sees' about as many vertices of $H$ as it misses.  Finally, following F{\"u}redi and K{\"u}ndgen, we define the \emph{edge bisection width} below which intuitively can be thought of as a measure of how much the graph $H$ is expanding \cite{FurediKundgen2006}.  
\begin{equation}\label{eq:bis-wid}
        \bis(H)
        =
        \min\{|\partial_H U|:\ U\subseteq V(H),\ |V(H)|/3\le |U|\le 2|V(H)|/3\}.
\end{equation}

We use the weighted Lipton--Tarjan planar separator theorem in the following standard form \cite[Theorem~4]{LiptonTarjan}: Every planar graph with $m$-vertices and nonnegative vertex weights of total weight $1$ has a vertex set $Z$ with $|Z|\le 2\sqrt{2}\sqrt m$ such that every component of the complement has weight at most $2/3$.  The next lemma is a standard separator-to-crossing-number argument, going back to Leighton~\cite[Theorem~7-1]{leighton1983complexity}; see also Pach--Shahrokhi- Szegedy~\cite[Theorem~2.1]{pach1994applications} and Djidjev~\cite{djidjev2001improved,djidjev2003crossing}.  The intuition behind it is that a drawing of a multigraph with few crossings planarizes to a planar graph with a small separator and such a separator would cut too few edges if the original graph had large edge bisection width.  Consequently the crossing number of the multigraph must be sufficiently large. 

\begin{lemma}[Bisection forces crossings]\label{lem:bisection-crossing}
Let $H$ be a multigraph on $m$ vertices, with maximum degree at most
$\Delta\ge 2$.  The crossing number of $H$ is bounded below quadratically in the edge bisection width as in following inequality.   
\begin{equation*}
        \crn(H)
        \ge
        \frac{\bis(H)^2}{8\Delta^2}-m.
\end{equation*}
In particular, if for some fixed $\eta > 0$, we have both $\bis(H)\ge \eta m$ and $m \geq 16\Delta^{2}/\eta^{2}$, then
\begin{equation*}
        \crn(H)\ge \frac{16\Delta^{2}}{\eta^{2}}m^2
\end{equation*}
\end{lemma}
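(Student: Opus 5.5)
The plan is to run the classical Leighton-style argument: planarize an optimal drawing of $H$, apply the weighted Lipton--Tarjan separator theorem with weight concentrated on the original vertices, and convert the separator into a balanced edge cut of $H$.

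\emph{Planarization and separator.} Fix a drawing of $H$ with $c=\crn(H)$ crossings, satisfying the standing drawing conventions. Replace each crossing by a new vertex of degree $4$. This gives a plane multigraph $H'$ on $m+c$ vertices. Parallel edges can be subdivided if one insists on simple graphs; this only adds weight-zero vertices of degree $2$ and does not affect the separator bound in any essential way. Give each original vertex weight $1/m$ and each crossing vertex weight $0$. The weighted separator theorem then yields $Z\subseteq V(H')$ with
\begin{equation*}
|Z|\le 2\sqrt2\,\sqrt{m+c},
\end{equation*}
such that every component of $H'-Z$ has weight at most $2/3$.

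\emph{Balancing.} Regard as pieces the original vertices inside each component of $H'-Z$, together with each original vertex of $Z$ as a singleton piece. Every piece has weight at most $2/3$, and the total weight is $1$.
\begin{itemize}
\item If some piece has weight at least $1/3$, let $U$ be that piece.
\item Otherwise all pieces have weight below $1/3$. Add pieces one at a time until the accumulated weight first reaches $1/3$; it then lies in $[1/3,2/3)$.
\end{itemize}
In either case $U$ is balanced in the sense of \eqref{eq:bis-wid}, with $|U|/m\in[1/3,2/3]$.

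\emph{Counting the cut.} Let $e$ be an edge of $H$ in $\partial_H U$. In $H'$, $e$ becomes a path from a vertex in $U$ to a vertex outside $U$. Since distinct pieces are either different components of $H'-Z$ or singletons of $Z$, this path must meet $Z$, either at an endpoint or at an interior crossing vertex. Charge $e$ to such a vertex of $Z$.
\begin{itemize}
\item An original vertex of $Z$ is charged by at most $\Delta$ edges.
\item A crossing vertex of $Z$ is charged by at most $2$ edges, namely the two edges crossing there, and $2\le\Delta$.
\end{itemize}
Hence
\begin{equation*}
\bis(H)\le|\partial_H U|\le \Delta|Z|\le 2\sqrt2\,\Delta\sqrt{m+c}.
\end{equation*}
Squaring gives $\bis(H)^2\le 8\Delta^2(m+c)$, which rearranges to $c\ge \bis(H)^2/(8\Delta^2)-m$.

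\emph{The particular case.} Substituting $\bis(H)\ge\eta m$ gives
\begin{equation*}
\crn(H)\ge \frac{\eta^2m^2}{8\Delta^2}-m .
\end{equation*}
When $m\ge16\Delta^2/\eta^2$ we have $m\le \eta^2m^2/(16\Delta^2)$, so $\crn(H)\ge \eta^2m^2/(16\Delta^2)$. The constant in the statement should therefore read $\eta^2/(16\Delta^2)$; the printed $16\Delta^2/\eta^2$ is presumably a typo.

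\emph{Main obstacle.} The delicate step is the balancing-and-charging step. One must check that the separator's own original vertices can be distributed without breaking balance; treating them as weight-$1/m$ singletons handles this. One must also check that every cut edge genuinely passes through $Z$. This relies on pieces being unions of whole components of $H'-Z$, so that the two endpoints of a cut edge lie in different pieces and the connecting path in $H'$ cannot avoid $Z$.
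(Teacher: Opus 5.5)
Your proposal is correct and follows essentially the same route as the paper: planarize an optimal drawing, apply the weighted Lipton--Tarjan separator with weight $1/m$ on the original vertices, greedily assemble a balanced union of pieces, and bound the cut by $\Delta|Z|$. You are also right that the second assertion should read $\eta^2 m^2/(16\Delta^2)$ rather than the printed $16\Delta^2 m^2/\eta^2$.

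One small fix is needed to obtain the constant $8$ exactly. Handle loops and parallel edges as the paper does, by deleting duplicates, which leaves the components of $P-S$ unchanged for every $S$. Subdividing them instead adds vertices and so enlarges the $\sqrt{m+c}$ term in the separator bound.
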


\begin{proof}
Let $x=\crn(H)$, and choose a drawing of $H$ with $x$ crossings in general position.  Planarize the drawing by replacing every crossing by a new vertex.  The resulting planar multigraph $P$ has $m+x$ vertices.\\
\\
We next construct a vertex subset $Z$ so that when we remove a certain subset of edges of $H$ defined in terms of $Z$ via Lipton--Tarjan, we obtain a new multigraph that has many components, but, the same number of vertices as $H$ and relate the number of vertices to the edge bisection width.  To this end, we define the following `cost' function on the vertex set $V(P)$.  Assign the weight $1/m$ to each original vertex and weight $0$ to each crossing vertex.  The total cost is $1$.  \\
\\
Form a new graph $P'$ by taking $P$ and removing all loops, and, for every pair of vertices with multiple edges connecting them, we remove all but one.  Passing from $P$ to $P'$ yields a simple planar graph and does not change the vertex set, nor the vertex weights.  Most importantly, for any subset $S$ of the vertex set, the components of $P-S$ and $P'-S$ are equal by construction.  We may thus apply the weighted planar separator theorem of Lipton--Tarjan to $P'$ and obtain a subset $Z$ of vertices from $P'$ with
\begin{equation*}
        |Z|\le 2\sqrt{2} \sqrt{m+x}
\end{equation*}
such that every component of $P'-Z$ has `cost' at most $2/3$.  By definition of our cost function, this means each component of $P'-Z$ has at most $2m/3$ of the original vertex set of $H$.  Because the components of $P'-Z$ and $P-Z$ are the same, this means that each component of $P-Z$ has at most $2m/3$ of the original vertex set $H$.  \\
\\
Our vertex set $Z$ consists of two types of vertices which we will refer to as `original vertices' and the vertices added corresponding to crossings which we will refer to as `crossing vertices.'  Take $H$ and form a new multigraph $H'$ defined by removing from $H$ each edge that is incident to an original vertex of $Z$, and, removing each edge from $H$ that passes through a crossing vertex of $Z$.  An original vertex of $Z$ accounts for at most $\Delta$-deleted edges of $H$ by hypothesis on the degree of the multigraph whereas every crossing vertex of $Z$ accounts for at most two deleted edges of $H$.  Since $\Delta\ge 2$, the total number of deleted edges is at most
\begin{equation*}
        \Delta |Z|
        \le
        2\sqrt{2} \Delta\sqrt{m+x}.
\end{equation*}
We emphasize that $H'$ comes from $H$ by deleting edges, not vertices, thus the vertex set of $H'$ is the same as $H$.  Note that each original vertex $v \in Z$ is an isolated component of $H'$ because each edge that was adjacent to it in $H$ was deleted.  Pick instead a component corresponding to $v \in V(H)-Z$.  We claim this component is contained in a component of $P-Z$.  To this end, if $v, w \in V(H)-Z$ are connected in $H'$, then we may connect them by edges of $H$ such that each edge is not incident to an original vertex of $Z$, and, each edge does not pass through a crossing vertex of $Z$.  By construction each such edge is also an edge of $P-Z$, and thus $v, w$ are connected in $P-Z$.  Hence every component of $H'$, except possibly isolated original vertices from $Z$, lies inside a component of $P-Z$.  Therefore, every component of $H'$ contains at most $2m/3$ vertices, or, is an isolated single vertex corresponding to an original vertex of $Z$.  
\\
\\
Choose a union $U$ of components of $H'$ so that $m/3\le |U|\le 2m/3$, i.e. $U$ is a balanced subset.  This can always be done because if some component has size at least $m/3$, simply choose that.  Otherwise, one may add components until the vertex count first reaches at least $m/3$ at which point the total number of vertices is still at most $2m/3$.  We claim this fixed $U$ has the size of $\partial_{H} U$ bounded by $2\sqrt{2} \Delta\sqrt{m+x}$.  Here we emphasize we are taking the boundary of $U$ in $H$ and not $H'$.  Because $U$ is a union of components of $H'$, this means every edge of $\partial_{H} U$ is an edge of $H$ and not an edge of $H'$.  By definition of $H'$, it follows every edge of $\partial_{H} U$ was a deleted edge, of which there are at most $2\sqrt{2} \Delta\sqrt{m+x}$.  Because $U$ is a balanced set, by definition of edge bisection width, $\bis(H) \leq 2\sqrt{2} \Delta\sqrt{m+x}$.  Rearranging in terms of $x = \crn(H)$ gives the desired inequality.  The second assertion follows readily if $\bis(H) \geq \eta m$ and $m \geq 16\Delta^{2}/\eta^{2}$.  
\end{proof}

For a sequence of cyclic permutations $\sigma_{L}$ of length $L$, Lemma \ref{lem:bisection-crossing} provides a quadratic lower bound for $\crn(H_{\sigma})$ in $L$ provided the edge bisection width $\bis(H_{\sigma_{L}})$ grows at least linearly in $L$.  That is to say, if we are able to produce a sequence of permutations of length $L$ for which the associated multigraph has edge bisection width that grows linearly, then we will show the crossing numbers of the associated multigraphs have quadratic growth rate, and thus quadratically growing Jordan crossing numbers by Lemma \ref{lem:cr-lower}.  \\
\\
Our goal is thus to produce permutations $\sigma$ of $[L]$ whose associated cycle-pair graphs $G_{\sigma}$ as in Definition \ref{def:cpmg} have large edge bisection width.  For a fixed $L$, one could look at each individual graph $G_{\sigma}$ for the $L!$ choices of an ordering of $[L]$, and see which of these graphs have sufficiently large edge bisection width.  If for each $L$, we are able to a find a suitable one, we choose it, and this would define our desired sequence.  Constructively showing the existence of these permutations has proven difficult, so instead, our proofs rely on probabilistic methods.  A bit more specifically, we count the number of orderings which fail to satisfy the sufficiently large edge bisection width property, and prove that you can always find a `good' permutation in the complement.  The probabilistic aspect comes choosing a random permutation and we show that for sufficiently large $L$, a generic permutation will satisfy our desired property.  This is the content of Theorem \ref{thm:random-path}.  
\\
\\
Because many of our arguments rely on counting principals, we introduce the so-called \emph{binary entropy function} defined below.
\begin{equation}\label{eq:binent}
h(t) = -t\log(t) - (1-t)\log(1-t) 
\end{equation}
This function has been studied thoroughly and makes its appearance in several different fields of mathematics, particularly Information Theory.  Its primary use appears to be approximations of binomial expansions and coefficients.  For our purposes, we will simply need some inequalities involving the binomial coefficients and the binary entropy function.  The first inequality we will need is standard and the proof is omitted but can be found in Example 11.1.3 of Cover and Thomas \cite{cover2006elements}.  These bounds hold for all integers satisfying $0 \leq k \leq N$.  
\begin{equation}\label{eq:binest}
\frac{1}{N+1}\exp\left(Nh(k/N)\right) \leq \binom{N}{k} \leq \exp\left(Nh(k/N)\right)
\end{equation}

The next Lemma will seem opaque, but will later be used for estimates on probabilities involving the binary entropy function.  For the ease of book keeping, we consolidate them here in a single lemma.   
\begin{lemma}\label{lem:binent}
There exists an $\eta > 0$ and an integer $L_{0}$, dependent on the choice of $\eta$, so that for all $L > L_{0}$, the inequalities of Equation \ref{eq:linequal} below hold. 
\begin{equation}\label{eq:linequal}
4\left(\eta L + 3\right)^{2}\exp\left(2(L+1)h(3\eta)\right) \leq \exp\left(\frac{3}{4}h(1/3)L\right) \phantom{==} \log(L+1) \leq \frac{1}{16}h(1/3)L \phantom{==} \eta L \geq 4-3\eta
\end{equation} 
\end{lemma}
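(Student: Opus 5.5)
The plan is to first fix $\eta$ small enough that the exponential rates in the first inequality separate, and then to take $L_0$ large enough that every polynomial or constant factor is absorbed. The key observation is that $h$ is continuous on $[0,1]$ with $h(0)=0$ and $h(1/3)>0$. Hence we may choose $\eta\in(0,1/6)$ so small that
\begin{equation*}
2h(3\eta)\le \tfrac{1}{4}h(1/3).
\end{equation*}
This is possible because $h(3\eta)\to 0$ as $\eta\to 0^+$. With this choice the left side of the first inequality is at most
\begin{equation*}
4(\eta L+3)^2 e^{2h(3\eta)}\exp\left(\tfrac14 h(1/3)L\right).
\end{equation*}
So the first inequality will follow once
\begin{equation*}
4(\eta L+3)^2 e^{2h(3\eta)} \le \exp\left(\tfrac12 h(1/3)L\right).
\end{equation*}
The left side is a fixed quadratic polynomial in $L$ and the right side is an exponential with positive rate $\tfrac12 h(1/3)$. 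The exponential eventually dominates, so there is an $L_1(\eta)$ beyond which this holds.

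For the second inequality, $\log(L+1)=o(L)$ while $\tfrac{1}{16}h(1/3)>0$, so it holds for all $L$ beyond some threshold $L_2$. To make this explicit, one can use $\log(L+1)\le 2\sqrt{L}$ and then require
\begin{equation*}
\sqrt{L}\ge \frac{32}{h(1/3)}.
\end{equation*}

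The third inequality is linear: $\eta L\ge 4-3\eta$ holds as soon as $L\ge L_3:=4/\eta$. We then set $L_0=\max(L_1,L_2,L_3)$, which depends only on $\eta$ as required.

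The only step needing care is the choice of $\eta$ in the first inequality. The exponent $2(L+1)h(3\eta)$ has to be beaten by $\tfrac34 h(1/3)L$ with room to spare, so that the polynomial prefactor can also be absorbed. Splitting the rate $\tfrac34 h(1/3)$ as $\tfrac14+\tfrac12$ handles this cleanly. Everything else is routine comparison of growth rates.
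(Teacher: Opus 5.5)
Your proof is correct and follows the paper's argument. You choose $\eta<1/6$ so that $2h(3\eta)$ is a fixed fraction of $h(1/3)$, let the remaining exponential rate absorb the polynomial prefactor, and dispatch the other two inequalities by routine growth comparisons. The only cosmetic difference is the split: you allot $\frac14 h(1/3)$ to $2h(3\eta)$ and $\frac12 h(1/3)$ to the prefactor, while the paper does the reverse by requiring $2h(3\eta)<\frac12 h(1/3)$. Your condition implies the paper's, so it is also compatible with the later uses of $\eta$.
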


\begin{proof}
Choose an $\eta > 0$ so small that $\eta < 1/6$ and $2h(3\eta) < \frac{1}{2}h(1/3)$.  The existence of such an $\eta$ follows because $h(t)$ tends to $0$ as $t$ tends to zero from the right.  Fix such an $\eta$.  We now need to establish the existence of $L_{0}$.  The existence of $L_{0}$ for the second and third inequalities is clear.  Thus we only need to justify the existence of $L_{0}$ for the first inequality.  To this end, we investigate the ratio below
\begin{equation*}
\frac{4(\eta L + 3)^{2}\exp\left(2(L+1)h(3\eta)\right)}{\exp\left(\frac{3}{4}h(1/3)L\right)} = C\left(\frac{(\eta L+3)^{2}}{\exp({\frac{1}{4}h(1/3)L)}}\right)\left(\frac{\exp(2h(3\eta)L)}{\exp{(\frac{2}{4}h(1/3)L)}}\right)
\end{equation*} 
In the above equality the $C$ has absorbed the constant terms of the left hand side ratio.  As $L \to \infty$, the first parenthetical term on the right clearly goes to $0$ as the denominator has positive exponential growth in $L$ whereas the numerator is polynomial in $L$.  The second parenthetical term on the right also goes to $0$ because by hypothesis, $2h(3\eta) < \frac{1}{2}h(1/3)$.  Thus the ratio must be less than $1$ for all $L$ bigger than some sufficiently large $L_{0}$ which proves our claim.   
\end{proof}

Having established this, we introduce some combinatorial terminology and notation.  Let $\sigma$ be a fixed linear ordering of $[L]$ denoted by $[L]_{\sigma}$.  We define an interval in $[L]_{\sigma}$ in the typical sense of a linear ordering, and emphasize that we permit singletons as intervals.  Due to the fact that we permit singletons as intervals, every subset $U \subset [L]_{\sigma}$ decomposes trivially as the union of disjoint intervals.  We need therefore introduce a more `coarse' decomposition which we refer to as the \emph{separated union}.  
\begin{definition}[Separated Sets]\label{def:separated_int}
Let $[L]_{\sigma}$ be a linear ordering of $[L]$.  We say two sets $A,B \subset [L]_{\sigma}$ are separated if they are disjoint and there exists a $c \in [L]$ for which $A < c < B$ or $B < c < A$.  We say the element $c$ \emph{separates} $A$ and $B$.  We say a collection of subsets $\{A_{i}\}_{i=1}^{i=m}$ of $[L]$ is separated if the collection is pairwise separated.
\end{definition}

For the sake of clarity we provide some examples.  Consider $[8]$ under the standard ordering $[8]_{0}$.  The interval $I_{1} = \{1,2,3\}$ and the interval $I_{2} = \{4,5,6\}$ are disjoint intervals but not separated intervals as no element exists between $3$ and $4$ in $[8]_{0}$.  The intervals $J_{1} = \{1,2,3\}$ and $J_{2} = \{5,6,7\}$ are are separated as $J_{1} < 4 < J_{2}$.  Next we prove a lemma regarding the decomposition mentioned before Definition \ref{def:separated_int}.  

\begin{lemma}[Separated Interval Decomposition]\label{lem:sep_int_decom}
Let $\sigma$ be a linear ordering on $[L]$.  Then every non-empty subset $U \subset [L]_{\sigma}$ admits a separated interval decomposition as $U = \bigcup_{i=1}^{i=k} I_{i}$ where the collection of intervals $\{I_{i}\}_{i=1}^{i=k}$ is separated.  If we label the intervals to be increasing in order, $I_{1} < I_{2} < \hdots < I_{k}$, then this decomposition of $U$ is unique.
\end{lemma}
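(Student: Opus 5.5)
The plan is to build the decomposition by taking maximal intervals, and then to show uniqueness by proving that the pieces of any separated interval decomposition must be exactly these maximal intervals.

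\textbf{Existence.} Call two elements $u,v\in U$ with $u<v$ in $[L]_\sigma$ \emph{linked} if every $c\in[L]$ satisfying $u<c<v$ lies in $U$. In other words, the whole $\sigma$-interval $[u,v]_\sigma$ is contained in $U$. Declaring $u$ linked to itself, this relation is reflexive and symmetric. It is also transitive: if $u\le v\le w$, then $[u,w]_\sigma=[u,v]_\sigma\cup[v,w]_\sigma$. For the other possible orderings of $u,v,w$, the interval spanned by two of the points is contained in the interval spanned by all three, which is again a union of two intervals lying in $U$. So linking is an equivalence relation. Each equivalence class is an interval of $[L]_\sigma$ by construction, and it is a maximal interval contained in $U$. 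Since $U$ is finite there are finitely many classes, and we order them increasingly as $I_1<\cdots<I_k$. For consecutive classes $I_j<I_{j+1}$, put $a=\max I_j$ and $b=\min I_{j+1}$. These are not linked, so some $c$ with $a<c<b$ satisfies $c\notin U$, and this $c$ separates $I_j$ from $I_{j+1}$. It also separates $I_j$ from every later class, since all later classes lie above $b$. Hence the collection is pairwise separated.

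\textbf{Uniqueness.} Let $U=\bigcup_{i=1}^{k'} J_i$ be any separated interval decomposition. We show that each $J_i$ equals a linking class.
\begin{itemize}
\item Because $J_i$ is an interval inside $U$, any two of its elements are linked. So $J_i$ lies inside a single class.
\item Suppose that class also met some $J_{i'}$ with $i'\neq i$. Pick $u\in J_i$ and $v\in J_{i'}$. They would be linked, so every element between them lies in $U$.
\item But the separating element $c$ for $J_i,J_{i'}$ lies strictly between $u$ and $v$. It does not lie in $J_i$ or $J_{i'}$, since these lie strictly on either side of it.
\item By the previous step $c\in U$, so $c$ belongs to some third piece $J_{i''}$. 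That piece has elements both smaller than $J_{i'}$ and larger than $J_i$ in the relevant sense. Following this back shows that the pieces $J$ meeting the $\sigma$-interval $[u,v]_\sigma$ cannot all be pairwise separated while covering $[u,v]_\sigma$.
\end{itemize}

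To make the last step precise, consider the pieces meeting $[u,v]_\sigma$. They are disjoint intervals that cover this interval. Order them and look at two adjacent ones $J<J'$. Adjacency means $\max J$ and $\min J'$ are consecutive in $[L]_\sigma$, so no element separates them, which is a contradiction. Therefore each class is exactly one $J_i$. Since the pieces cover $U$, $\{J_i\}$ is the set of classes, and ordering both families increasingly gives the same list.

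The only delicate point is this adjacency argument. One has to check that a finite union of disjoint intervals covering an interval has consecutive pieces whose endpoints are adjacent in $[L]_\sigma$. This holds because the pieces are disjoint and together cover $[u,v]_\sigma$: if $\max J$ and $\min J'$ were not consecutive, the element between them would have to lie in another piece, contradicting that $J$ and $J'$ are adjacent in the ordering of the pieces.
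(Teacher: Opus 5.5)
Your proof is correct and is essentially the paper's. Your linking classes are exactly the maximal runs of $1$'s in the paper's binary word $x_0x_1\cdots x_{L-1}$, which records membership in $U$ along $[L]_\sigma$. The paper only asserts uniqueness and pairwise separation as clear from the construction. Your adjacency argument actually proves that every pairwise-separated decomposition must consist of these maximal runs. The one implicit step is that $\max J$ and $\min J'$ lie in $[u,v]_\sigma$ because both pieces meet it, so any element strictly between them lies in $U$.
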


\begin{proof}
Order the elements of $[L]$ in a line according to $\sigma$.  Assign any subset $U \subset [L]_{\sigma}$ the binary string of $0$'s and $1$'s $w := x_{0}x_{1}\hdots x_{L-1}$ defined by $x_{j} := 1$ if $\sigma_{j} \in U$ and $0$ if $\sigma_{j} \notin U$.  Reading left to right, let $I_{1}$ be the first contiguous string of $\sigma_{j}$'s where $x_{j} = 1$ in the word $w$.  Let $I_{2}$ be the second continuous string of $\sigma_{j}$'s where $x_{j} = 1$ in the word $w$, so on and so forth.  There are finitely many because $[L]$ is finite, and this provides ours desired decomposition.  It is clear from construction that $I_{1} < I_{2} < \hdots < I_{k}$ and this decomposition is unique.  
\end{proof}

We remark in the case where $U \subset [L]_{\sigma}$ is the empty set, we simply define its separated interval decomposition to be the collection containing just the empty set.  With this established, we introduce a collection of subsets that will repeatedly appear throughout our counting arguments.  By Lemma \ref{lem:sep_int_decom}, every subset of $[L]_{\sigma}$ admits a unique separated interval decomposition, and thus we assign each subset of $[L]_{\sigma}$ the invariant which is the number of separated intervals in its separated interval decomposition.  In some sense, this is a measure of how connected $U$ is inside of $[L]_{\sigma}$.  For $0 \leq r \leq s \leq L$, define $I_{s}(\sigma,r)$ to be the collection of all $s$-element subsets of $[L]_{\sigma}$ whose separated interval decomposition contains at most $r$-intervals.  By our definitions, for either $s = 0$ or $r = 0$, $I_{s}(\sigma,r)$ is just the collection containing just the empty set.  
\\
\\
With these definitions established, we give an overview of the remainder of this section.  In Lemma \ref{lem:sepintbounds}, we provide counting arguments to show that for fixed integers $0 \leq r \leq s$ and $L$, if one picks a random linear ordering for $[L]$, and a fixed subset $U \in [L]$, the probability that $U$ is in \emph{both} $I_{s}(\sigma,r)$ and $I_{s}(\sigma_{0},r)$ is bounded above by a ratio of terms involving $I_{s}(\sigma_{0},r)$ and binomial coefficients.  In Lemma \ref{lem:estimate_prob}, we bound this ratio by a decreasing exponential once we restrict our attention to balanced sets $U$ with $|U| = s \in [L/3,2L/3]$, $r = \eta L$, and sufficiently large $L$.  In Lemma \ref{lem:badbound}, we show that every set which fails to have large edge bisection width must have a small number of intervals in its separated interval decomposition in both $[L]$ and $[L]_{\sigma}$.  In Theorem \ref{thm:random-path}, we combine these results to show that a randomly chosen permutation of $[L]$ has an exponentially small likelihood of containing balanced subsets whose separated interval decomposition in both $[L]$ and $[L]_{\sigma}$ has a small amount of separated intervals.  This last remark follows more or less from the following heuristic.  Say you take a balanced subset $U$ in $[L]_{0}$ whose separated interval decomposition has few intervals.  Then apply a random permutation to it.  Odds become very high that you destroyed the connectivity of the intervals in this process if $L/3 \leq |U| \leq 2L/3$, so its new separated interval decomposition has more separated intervals.  Take for example the case $U$ is just an interval of length $L/3$.  After applying a random permutation, it will almost certainly become a bunch of intervals in $[L]_{0}$.  This is the key insight that helps prove our result.  To this end, we begin with Lemma \ref{lem:sepintbounds}.  

\begin{lemma}[Separated Intervals Bounds]\label{lem:sepintbounds}
Let $r \leq s$ be non-negative integers and $L$ be a positive integer.  Pick some $\sigma$ randomly amongst the $L!$ linear orderings of $[L]$ under the uniform distribution.  Let $I_{s}(\sigma,r)$ and $I_{s}(\sigma_{0},r)$ denote the collections of $s$-elements subsets of $[L]$ which admit separated interval decompositions of at most $r$-intervals in the orderings $[L]_{\sigma}$ and $[L]_{0}$ respectively.  Define the random variable $X_{s,r}(\sigma) := |I_{s}(\sigma,r) \cap I_{s}(\sigma_{0},r)|$.  Then the probability that $X_{s,r}$ is greater than $0$ is bounded above $|I_{s}(\sigma_{0},r)|^{2}/\binom{L}{s}$.  Symbolically $\Fpr\left(X_{s,r} \geq 1 \right) \leq |I_{s}(\sigma_{0},r)|^{2}/\binom{L}{s}$.  
\end{lemma}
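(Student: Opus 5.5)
The plan is a first-moment (Markov) argument. Since $X_{s,r}$ is a nonnegative integer-valued random variable, we have
\begin{equation*}
\Fpr(X_{s,r}\ge 1)\le \mathbb{E}[X_{s,r}].
\end{equation*}
Write $X_{s,r}$ as a sum of indicators over the fixed, non-random family $I_s(\sigma_0,r)$:
\begin{equation*}
X_{s,r}(\sigma)=\sum_{U\in I_s(\sigma_0,r)} \mathbf{1}\bigl[U\in I_s(\sigma,r)\bigr].
\end{equation*}
By linearity of expectation, it then suffices to show that for every fixed $s$-element subset $U\subset[L]$,
\begin{equation*}
\Fpr\bigl(U\in I_s(\sigma,r)\bigr)=\frac{|I_s(\sigma_0,r)|}{\binom{L}{s}}.
\end{equation*}
Summing over the at most $|I_s(\sigma_0,r)|$ choices of $U$ gives $\mathbb{E}[X_{s,r}]\le |I_s(\sigma_0,r)|^2/\binom{L}{s}$, which is the claim. (The sum actually has exactly $|I_s(\sigma_0,r)|$ terms.)

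The key step is to translate membership of $U$ in $I_s(\sigma,r)$ into a statement about positions. Identify the linear ordering $\sigma=(\sigma_0,\ldots,\sigma_{L-1})$ with the bijection $j\mapsto\sigma_j$ from positions $[L]_0$ to $[L]$. Let $P_\sigma(U)=\{j:\sigma_j\in U\}\subset[L]_0$ be the set of positions occupied by $U$. The binary word $w=x_0\cdots x_{L-1}$ used in the proof of Lemma~\ref{lem:sep_int_decom} is exactly the indicator word of $P_\sigma(U)$ in the standard order. Hence the number of separated intervals of $U$ in $[L]_\sigma$ equals the number of separated intervals of $P_\sigma(U)$ in $[L]_0$; both are the number of maximal runs of $1$'s in $w$. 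Therefore
\begin{equation*}
U\in I_s(\sigma,r)\iff P_\sigma(U)\in I_s(\sigma_0,r).
\end{equation*}

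It remains to see that $P_\sigma(U)=\sigma^{-1}(U)$ is a uniformly random $s$-subset of $[L]$ when $\sigma$ is uniform among the $L!$ orderings. For any two $s$-subsets $T,T'$, pick a bijection $\tau$ of $[L]$ with $\tau(T)=T'$. Precomposition by $\tau$ is a bijection of the set of orderings, and it carries $\{\sigma:P_\sigma(U)=T'\}$ onto $\{\sigma:P_\sigma(U)=T\}$. So all $\binom{L}{s}$ values of $P_\sigma(U)$ are equally likely, and the displayed probability is $|I_s(\sigma_0,r)|/\binom{L}{s}$.

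The only point requiring care is this identification of separated-interval counts under $\sigma$ with those of the position set under the standard order, including the degenerate conventions for $s=0$ or $r=0$. In those cases both sides are the collection containing just the empty set and the bound is trivial. Everything else is the union bound.
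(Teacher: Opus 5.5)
Your proposal is correct and is essentially the paper's proof: Markov's inequality plus linearity of expectation over indicators indexed by $I_s(\sigma_0,r)$, using that $U\in I_s(\sigma,r)$ if and only if the uniformly distributed $s$-set $\sigma^{-1}(U)$ lies in $I_s(\sigma_0,r)$. Your run-counting and symmetry arguments only spell out what the paper asserts. The one quibble is your degenerate-case remark: when $r=0<s$ the natural reading makes $I_s(\cdot,0)$ empty, so the bound follows trivially from your main argument, whereas the ``collection containing just the empty set'' convention would actually make the inequality fail for $0<s<L$.
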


We remark that the usefulness of the probability bound above depends on the values of $s$, $r$, and $L$ chosen.  For our purposes we will later restrict our parameters and make these estimates more useful, however, for the purpose of clarity we have isolated the counting parts of our argument to this lemma.  

\begin{proof}
Pick a $r$ and $s$ as in the hypotheses, and a random linear ordering $\sigma$ of $\{0,1,\hdots, L-1\}$.  We calculate the probability that $U$ is inside $I_{s}(\sigma,r)$.  Note that $\sigma$ induces an order preserving bijection between $[L]$ and $[L]_{\sigma}$ which takes $i$ to $\sigma_{i}$.  Because $i < j$ in $[L]$, if and only if $\sigma(i) < \sigma(j)$ in $[L]_{\sigma}$, $\sigma$ must take intervals to intervals and therefore induces a bijection between $I_{s}(\sigma,r)$ and $I_{s}(\sigma_{0},r)$.  Specifically, $U \in I_{s}(\sigma_{0},r)$ if and only if $\sigma(U) \in I_{s}(\sigma,r)$.  
\\
\\
As a consequence, a fixed $U \in I_{s}(\sigma_{0},r)$ is also inside $I_{s}(\sigma,r)$ if and only if $\sigma^{-1}(U) \in I_{s}(\sigma_{0},r)$.  The possible values of $\sigma^{-1}(U)$ as $\sigma$ ranges through all possible linear orderings of ${0,1,\hdots, L-1}$ are uniformly distributed amongst all $s$-element subsets of $\{0,1,\hdots, L-1\}$.  Because $\sigma^{-1}(U)$ is equally likely to be any $s$-element subset of $[L]$, the probability that $U$ is also in $I_{s}(\sigma,r)$ for a randomly chose linear ordering $\sigma$ is equal to $|I_{s}(\sigma_{0},r)|/\binom{L}{s}$. \\
\\
Having established this probability for a fixed $U \in I_{s}(\sigma_{0},r)$, we calculate the expected value of the number of all such $U$ in $I_{s}(\sigma_{0},r)$ which are also in $I_{s}(\sigma,r)$ for a randomly selected linear ordering $\sigma$.  For each $U \in I_{s}(\sigma_{0},r)$, define the random variable $Y_{U,r}(\sigma)$ to be $+1$ if $U \in I_{s}(\sigma,r)$ and $0$ otherwise.  We emphasize here the sample space is the collection of all possible linear orderings of $\{0,1,\hdots, L-1\}$.  By our previous calculation, for a each $U \in I_{s}(\sigma_{0},r)$, $\Fpr(Y_{U,r} = 1) = |I_{s}(\sigma_{0},r)|/\binom{L}{s}$.  Define the random variable $X_{s,r} := \sum_{U \in I_{s}(\sigma_{0},r)} Y_{U,r}$.  By definition, the value of $X_{s,r}$ for a linear ordering $\sigma$ equals the size of the intersection of $I_{s}(\sigma,r)\cap I_{s}(\sigma_{0},r)$ .  We aim to bound the probability that $X_{s,r} \geq 1$.  \\
\\
To this end, by Markov's inequality we have that $\Fpr (X_{s,r} \geq 1) \leq \mathbb{E}[X_{s,r}]$ so it suffices to bound the expected value.  By linearity of the expected value 
\begin{equation*}
\mathbb{E}[X_{s,r}] = \sum_{U \in I_{s}(\sigma_{0},r)} \mathbb{E}[Y_{U,r}] = \sum_{U \in I_{s}(\sigma_{0},r)} (+1)\Fpr(Y_{U,r} = 1) = \frac{|I_{s}(\sigma_{0},r)|^{2}}{\binom{L}{s}}
\end{equation*}
Thus for a randomly chosen linear ordering $\sigma$ of $[L]$, the probability that there exists a set $U$ which is expressible as the union of at most $r$-separated intervals in both the natural ordering $[L]_{0}$ and the random ordering $[L]_{\sigma}$ is bounded above by $|I_{s}(\sigma_{0},r)|^{2}/\binom{L}{s}$.
\end{proof}

The next lemma addresses the asymptotics of the ratio of $|I_{s}(\sigma_{0},r)|^{2}$ and $\binom{L}{s}$ and shows that the probability there exists a balanced subset $U \subset [L]$ with a small interval number of intervals in the separated interval decomposition in both $[L]_{0}$ and $[L]_{\sigma}$ for a random permutation $\sigma$ is nearly trivial.  Again the intuition for this is that the permutation will generically take an interval and separate it into a bunch of other intervals.

\begin{lemma}[Uniformly Bounded Probabilities]\label{lem:estimate_prob}
There exists a real $\eta > 0$ and positive integer $L_{0}$, which depends on $\eta$, such that the following holds.  We can find an $a > 0$ so that for any integer $L > L_{0}$, and all integers $s \in [L/3,2L/3]$, if we let $r = \lceil \eta L \rceil + 1$, then the probability that $X_{r,s}$ is bigger than $0$ is bounded above by $e^{-aL}$. 
\end{lemma}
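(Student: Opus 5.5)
The plan is to combine Lemma~\ref{lem:sepintbounds} with an explicit count of $|I_{s}(\sigma_{0},r)|$ and the entropy estimates (\ref{eq:binest}), then absorb the polynomial factors using Lemma~\ref{lem:binent}. We take the $\eta$ and $L_{0}$ supplied by Lemma~\ref{lem:binent}. By Lemma~\ref{lem:sepintbounds}, $\Fpr(X_{s,r}\ge 1)\le |I_{s}(\sigma_{0},r)|^{2}/\binom{L}{s}$. So it suffices to bound the numerator from above and the denominator from below, uniformly in $s\in[L/3,2L/3]$.

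\emph{Counting step.} An $s$-subset of $[L]_{0}$ with exactly $k$ separated intervals is determined by the $2k$ endpoints (left endpoint of each block and the element after it). Equivalently, the associated binary string $w$ has at most $2k$ switch positions among the $L+1$ gaps (padding with a $0$ at each end). Hence
\[
|I_{s}(\sigma_{0},r)|\le \sum_{j=0}^{2r}\binom{L+1}{j}\le (2r+1)\binom{L+1}{2r},
\]
the last step valid since $2r\le (L+1)/2$ once $\eta<1/6$ and $L$ is large. With $r=\lceil\eta L\rceil+1\le \eta L+2$, we get $2r+1\le 2(\eta L+3)$. We also need $2r/(L+1)\le 3\eta$; this is what the third inequality $\eta L\ge 4-3\eta$ in Lemma~\ref{lem:binent} is designed to guarantee, since $2\eta L+4\le 3\eta(L+1)$ rearranges to $\eta L\ge 4-3\eta$. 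Since $h$ is increasing on $[0,1/2]$ and $3\eta<1/2$, (\ref{eq:binest}) yields
\[
|I_{s}(\sigma_{0},r)|^{2}\le 4(\eta L+3)^{2}\exp\bigl(2(L+1)h(3\eta)\bigr)\le \exp\bigl(\tfrac34 h(1/3)L\bigr),
\]
using the first inequality of Lemma~\ref{lem:binent}.

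\emph{Denominator step.} For $s/L\in[1/3,2/3]$ we have $h(s/L)\ge h(1/3)$ by concavity and symmetry of $h$. The lower bound in (\ref{eq:binest}) then gives
\[
\binom{L}{s}\ge \frac{1}{L+1}\exp\bigl(h(1/3)L\bigr)\ge \exp\bigl(\tfrac{15}{16}h(1/3)L\bigr),
\]
by the second inequality $\log(L+1)\le \frac1{16}h(1/3)L$.

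Dividing the two bounds gives $\Fpr(X_{s,r}\ge1)\le \exp\bigl(-\tfrac{3}{16}h(1/3)L\bigr)$, so $a=\tfrac{3}{16}h(1/3)$ works uniformly in $s$ and $L>L_{0}$.

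The main obstacle is the counting step. Two things must be checked there: that the endpoint encoding genuinely bounds $|I_{s}(\sigma_{0},r)|$ by at most $2r$ choices among $L+1$ slots, and that the monotonicity needed to replace the sum by its largest term (and $h(2r/(L+1))$ by $h(3\eta)$) actually holds. This is where the constraints $\eta<1/6$ and the third inequality of Lemma~\ref{lem:binent} enter.
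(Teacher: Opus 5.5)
Your proposal is correct and follows essentially the same route as the paper: the endpoint encoding gives $|I_{s}(\sigma_{0},r)|\le\sum_{k\le 2r}\binom{L+1}{k}$, which is controlled by the entropy bound together with the first and third inequalities of Lemma~\ref{lem:binent}; $\binom{L}{s}$ is bounded below using the second inequality; and one concludes with $a=\tfrac{3}{16}h(1/3)$. The only cosmetic difference is that you first pass to the largest term $(2r+1)\binom{L+1}{2r}$ before applying the entropy bound, whereas the paper applies the entropy bound to each term and then uses the monotonicity of $h$.
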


\begin{proof}
Choose and $0 < \eta < 1/6$ satisfying $2h(3\eta) < \frac{1}{2}h(1/3)$ as in $\eta$ Lemma \ref{lem:binent}.  Now pick a positive integer $L_{0}$ as guaranteed by Lemma \ref{lem:binent} so that all three inequalities of Equation \ref{eq:linequal} hold for all $L > L_{0}$.  We now aim to bound the size of $I_{s}(\sigma_{0},r)$ for all $s \in [L/3, 2L/3]$. 
 Let $U$ be an $s$-element subset of $[L]$ expressible as a union of at most $r$ separated intervals in the order $[L]_{0}$.  Say there are $k$ such intervals.  Each interval $I_{i}$ may be assigned the pair of elements $(a_{i},b_{i})$ where $a_{i}$ is the smallest element of $I_{i}$, and $b_{i}$ is the immediate successor of the largest element of $I_{i}$.  To make this well defined, we define the immediate successor of $L-1$ to be $L \notin [L]$.  By definition $a_{i} < b_{i}$ for each $1 \leq i \leq k$.  Thus each such $U$ can be assigned the $k$-pairs of elements $\{(a_{i},b_{i})\}_{i=1}^{k}$ from $[L]\cup\{L\}$ which uniquely determine $U$, and its separated interval decomposition.  By hypothesis $k$ can be any number between $0$ and $r$, thus we have the bounds below.  
\begin{equation}\label{eq:choose_est}
|I_{s}(\sigma_{0},r)| \leq \sum_{k = 0}^{r} \binom{L+1}{2k} \leq \sum_{k=0}^{2r} \binom{L+1}{k}
\end{equation}
While this is a significant overestimate, it will be easier to work with than being precise.  In particular, we use the estimates in Equation \ref{eq:binest} to yield the inequality below for each $0 \leq k \leq 2r$.  
\begin{equation*}
\binom{L+1}{k} \leq \exp\left((L+1)h\left(\frac{k}{L+1}\right)\right)
\end{equation*}
By the third inequality of Equation \ref{eq:linequal}, $2r \leq 3\eta(L+1)$ and by definition of $\eta$, $3\eta < 1/2$.  Thus the monotonicity of $h$ on $[0,1/2]$ yields for each $0 \leq k \leq 2r$ that $h(k/(L+1)) \leq h(3\eta)$.  Combining these inequalities with Equation \ref{eq:choose_est}, we have for each $s \in [L/3,2L/3]$,
\begin{equation*}
|I_{s}(\sigma_{0},r)| \leq \sum_{k=0}^{2r} \binom{L+1}{k} \leq (2r+1)\exp\left((L+1)h(3\eta)\right)
\end{equation*}
By squaring both sides of the above inequality, and using the fact that $L$ satisfies the first inequality of Equation \ref{eq:linequal}, we obtain for each $s \in [L/3,2L/3]$ the inequality below.
\begin{equation}\label{eq:numbound}
|I_{s}(\sigma_{0},r)|^{2} \leq \exp\left(\frac{3}{4}h(1/3)L\right)
\end{equation}
Similarly, Equation \ref{eq:binest} and monotonicity of $h$ gives for each $s \in [L/3,2L/3]$ the inequality below.
\begin{equation}\label{eq:denbound}
\frac{1}{L+1}\exp\left(h(1/3)L\right) \leq \binom{L}{s}  
\end{equation}
Consequently for each $s \in [L/3,2L/3]$ we have that
\begin{equation*}
 \frac{|I_{s}(\sigma_{0},r)|^{2}}{\binom{L}{s}} \leq \left(L+1\right)\exp\left(-\frac{1}{4}h(1/3)L\right) \leq \exp\left(-\frac{3}{16}h(1/3)L\right)
\end{equation*}
The last inequality above is a consequence of the third inequality of Equation \ref{eq:linequal}.  Combining this result with Lemma \ref{lem:sepintbounds} establishes our result.
\end{proof}

We wish to show that the probability a randomly selected linear ordering has a sufficiently large edge bisection width.  The next lemma relates this property to the separated interval decomposition in both $[L]_{0}$ and $[L]_{\sigma}$ for a randomly selected order.  Before we state this, we need to introduce some terminology.  \\
\\
Recall from Definition \ref{def:ppmg}, for a given permutation $\sigma$ of $[L]$, we define the path-pair graph $H_{\sigma}$ to be the union of the Hamiltonian path graphs induced by $[L]_{0}$ and $[L]_{\sigma}$.  We wish to count the number of balanced subsets $U$ of $[L]$ which have the property that $|\partial_{H_{\sigma}} U| < \eta L$.  We call such sets $U$ \emph{bad}.  Due to the multigraph structure of $H_{\sigma}$ coming from two linear orderings, one from $[L]$ and one from $[L]_{\sigma}$, the set of boundary edges $\partial_{U}(H_{\sigma})$ is the union of the boundary edges of $U$ coming from the two Hamiltonian paths associated to $[L]_{0}$ and $[L]_{\sigma}$.  Counting these boundary edges can be done in terms of the linear ordering induced on $[L]$ by both $[L]_{0}$ and $[L]_{\sigma}$.  A bit more precisely, we show every bad set $U$ has a separated interval decomposition with at most a fixed number of intervals in both $[L]_{0}$ and $[L]_{\sigma}$.  This will allow us to bound the number of bad sets in $[L]$, and apply our previous probability estimates which are exponentially decreasing.  We state this in the lemma below.

\begin{lemma}[Bad Set Bound]\label{lem:badbound}
Let $\eta$ and $r$ be as in Lemma \ref{lem:estimate_prob} and $\sigma$ be a permutation of $[L]$.  Form the path-pair graph $H_{\sigma}$.  Every bad balanced subset $U \subset [L]$, i.e. a subset of $[L]$ with $L/3 \leq |U| \leq 2L/3$ satisfying $|\partial_{H_{\sigma}} U| < \eta L$, is the union of at most $r = \lceil \eta L \rceil + 1$ separated intervals in both the order $[L]_{0}$ and $[L]_{\sigma}$.  
\end{lemma}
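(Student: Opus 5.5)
The argument is a direct count of boundary edges along each of the two Hamiltonian paths. Fix a bad balanced set $U$ and write $\partial_{H_\sigma}U = \partial_{P_0}U \sqcup \partial_{P_\sigma}U$. This is a disjoint union as edge multisets, since $H_\sigma$ is the edge-disjoint union of the paths $P_0$ and $P_\sigma$. In particular $|\partial_{P_0}U| \le |\partial_{H_\sigma}U| < \eta L$, and likewise $|\partial_{P_\sigma}U| < \eta L$. The lemma therefore reduces to a statement about a single path: for a linear order on $[L]$ with associated Hamiltonian path $P$, the number of separated intervals of $U$ is controlled by $|\partial_P U|$.

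To prove that statement, use Lemma~\ref{lem:sep_int_decom}. Write $U = I_1 \cup \dots \cup I_k$ as its separated interval decomposition in the given order, with $I_1 < \dots < I_k$. Encode $U$ by the binary word $w = x_0 x_1 \cdots x_{L-1}$ as in the proof of that lemma. The edges of $P$ join consecutive letters, so the edges of $\partial_P U$ are exactly the positions $j$ with $x_j \neq x_{j+1}$, the switches in $w$.

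Each maximal block of $1$'s, other than one touching an end of the word, contributes two switches: one at its left end and one at its right end. A block touching an end contributes at least one switch, unless $U = [L]$, which is excluded because $|U| \le 2L/3 < L$. Distinct blocks share no switches. Hence $|\partial_P U| \ge 2k - 2$, and so $k \le |\partial_P U|/2 + 1$.

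Apply this with $P = P_0$ and with $P = P_\sigma$. In each case
\[
k < \frac{\eta L}{2} + 1 \le \lceil \eta L \rceil + 1 = r.
\]
So $U \in I_s(\sigma_0, r) \cap I_s(\sigma, r)$ with $s = |U|$.

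There is essentially no obstacle; the one point needing care is the bookkeeping at the endpoints of the paths, which is why the bound is $2k-2$ rather than $2k$. This is exactly what the $+1$ in $r$ absorbs. Note also that in the paper's convention the order $[L]_\sigma$ is the order along the path $P_\sigma$, so the switch count above applies to it verbatim.
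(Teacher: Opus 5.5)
Your proof is correct and follows essentially the same route as the paper. Both split $\partial_{H_\sigma}U$ into its $P_0$- and $P_\sigma$-parts, each of size less than $\eta L$, and bound the number of maximal runs of $U$ in each order by the boundary count along the corresponding path; the only difference is the charging step, where you count two switches per run (losing at most two at the ends) to get $k \le |\partial_P U|/2+1$, while the paper charges each run only to the edge after its maximum (with a phantom edge $\{L-1,L\}$) to get $k \le |\partial_P U|+1$, and either bound is below $r$.
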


\begin{proof}
Let $U$ be bad as in the hypotheses for some fixed ordering of $[L]_{\sigma}$.  If we define $P_{0}$ and $P_{\sigma}$ to be the Hamiltonian paths associated to the orderings $[L]_{0}$ and $[L]_{\sigma}$ of $[L]$ respectively, then it is clear that $H_{\sigma} = P_{0}\cup P_{\sigma}$ as a multigraph.  The number of edges leaving the subset $U$ in $H_{\sigma}$ is thus the sum of the number of edges leaving $U$ in $P_{0}$ and the number of edges leaving $U$ in $P_{\sigma}$.  Symbolically that is to say $|\partial_{H_{\sigma}} U| = |\partial_{P_{0}}U| + |\partial_{P_{\sigma}}U|$.  Consequently if $U$ is bad in $H_{\sigma}$, then both summands $|\partial_{P_{0}}U|$ and $|\partial_{P_{\sigma}}U|$ are also less than $\eta L$.  We focus our attention for now on $P_{0}$.  \\
\\
Let $b := |\partial_{P_{0}}U|$.  Let $k$ be the number of intervals in the separated interval decomposition of $U$ inside $[L]_{0}$.  We claim $k$ is at most $b+1$.  To each of the $k$-intervals we assign the edge $\{b_{i},b_{i}+1\} \in \partial_{P_{0}} U$ where here $b_{i}$ is the maximal element of the interval and $b_{i}+1$ is its immediate successor in $[L]_{0}$.  To make this well defined, and if $b_{i} = L-1$, we add the additional edge $\{L-1,L\}$.  Because each interval is separated, no two distinct intervals can map to the same edge in $\partial_{P_{0}} U \cup \{\{L-1,L\}\}$.  Thus, the number of possible intervals is at most $b+1$.  By hypothesis, $b < \eta L$, therefore $b+1 < \lceil \eta L \rceil + 1 = r$.  The same argument shows that $U$ is also a union of at most $r$-separated intervals in $[L]_{\sigma}$, thus proving our claim.
\end{proof}

Lemma \ref{lem:badbound} says that for a fixed permutation $\sigma$, if $U$ is a balanced bad set, then $U \in I_{s}(\sigma_{0},r) \cap I_{s}(\sigma,r)$.  Lemmas \ref{lem:estimate_prob} and \ref{lem:sepintbounds} tell us that if we choose a random permutation $\sigma$, the size of this intersection decreases exponentially in $L$, and this rate of exponential decrease is uniformly bounded for each choice of $s \in [L/3,2L/3]$.  Combining these observations gives us the theorem below which says there exists sufficiently large $L$ and cyclic permutations $\sigma$ of $[L]$ for which the edge bisection width of the path-pair graph $H_{\sigma}$ has edge bisection width that grows linearly in $L$, and, these permutations are generic.  Motivated by our construction in Lemma \ref{lem:tile}, we call these \emph{crossing-rich blocks}.
%
\begin{theorem}[Random path pairs]\label{thm:random-path}
There exists a real $\eta > 0$ and positive integer $L_{0}$, which depends on $\eta$, such that the following holds.  We can find a $b > 0$ so that for any integer $L > L_{0}$, and all integers $s \in [L/3,2L/3]$, if we let $r = \lceil \eta L \rceil + 1$, then the probability that a bad balanced set $U$ exists for the path-pair multigraph $H_{\sigma}$ where $\sigma$ is some uniformly random selected ordering of $[L]$ is bounded above by $e^{-bL}$.  Symbolically, 
\begin{equation*}
\Fpr\left(\text{ There exists a }U \subset [L] \text{ with } L/3 \leq |U| \leq 2L/3 \text{ and } |\partial_{H_{\sigma}} U| < \eta L\right) < e^{-bL}
\end{equation*}
In particular, this means we can find a fixed $\eta > 0$ and a constant $c_{\eta} > 0$, so that for all sufficiently large $L$, there exists sequences of permutations $\sigma_{L}$ of $[L]$ whose associated pair-path multigraphs $H_{\sigma_{L}}$ have $\crn(H_{\sigma_{L}}) \geq c_{\eta} L^{2}$.  Moreover, if one chooses a random permutation of $[L]$, $H_{\sigma_{L}}$ generically has $\crn(H_{\sigma_{L}}) \geq c_{\eta} L^{2}$ for sufficiently large $L$.    
\end{theorem}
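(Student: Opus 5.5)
The plan is a union bound over the sizes $s$, followed by the bisection-to-crossing lemma. Fix $\eta$, $L_0$ and $a>0$ as produced by Lemma~\ref{lem:estimate_prob}, and set $r=\lceil \eta L\rceil+1$.

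\emph{Step 1: bad sets are rare.} Let $U$ be any bad balanced set for $H_\sigma$, with $s=|U|\in[L/3,2L/3]$. By Lemma~\ref{lem:badbound}, $U$ is a union of at most $r$ separated intervals in both $[L]_0$ and $[L]_\sigma$. Hence $U\in I_s(\sigma_0,r)\cap I_s(\sigma,r)$, which means $X_{s,r}(\sigma)\ge 1$. Therefore the event in the statement is contained in
\begin{equation*}
\bigcup_{s\in[L/3,2L/3]}\{X_{s,r}\ge 1\}.
\end{equation*}
There are at most $L+1$ such integers $s$. Lemma~\ref{lem:estimate_prob} bounds each term by $e^{-aL}$, uniformly in $s$, so the probability is at most $(L+1)e^{-aL}$. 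Taking $b=a/2$ and enlarging $L_0$ so that $L+1<e^{aL/2}$ gives the displayed bound $e^{-bL}$. In fact the proof of Lemma~\ref{lem:estimate_prob} gives $a=\tfrac{3}{16}h(1/3)$. The second inequality of Lemma~\ref{lem:binent} already absorbs a factor $L+1$, so this step should be routine.

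\emph{Step 2: large bisection width.} If no bad balanced set exists, then every balanced $U$ has $|\partial_{H_\sigma}U|\ge \eta L$, so $\bis(H_\sigma)\ge \eta L$. Since $e^{-bL}<1$ for $L>L_0$, such a $\sigma_L$ exists for every large $L$. Moreover, a uniformly random $\sigma$ has this property with probability at least $1-e^{-bL}\to 1$, which is the genericity claim.

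\emph{Step 3: convert to crossings.} Apply Lemma~\ref{lem:bisection-crossing} to $H=H_{\sigma_L}$ with $m=L$ and $\Delta=4$, since $H_\sigma$ is a union of two paths. This gives
\begin{equation*}
\crn(H_{\sigma_L})\ge \frac{\eta^2L^2}{128}-L.
\end{equation*}
For $L\ge 256/\eta^2$ the right side is at least $\frac{\eta^2}{256}L^2$, so we may take $c_\eta=\eta^2/256$.

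The expected obstacle is bookkeeping rather than ideas. First, I would use the first inequality of Lemma~\ref{lem:bisection-crossing} directly instead of its stated ``in particular'' constant, which looks misprinted. Second, I would check that the definition of bad ($<\eta L$) matches the hypothesis $\bis\ge\eta L$ exactly, and that the ceiling and $+1$ in $r$ are consistent with the third inequality of Lemma~\ref{lem:binent}. Lemma~\ref{lem:estimate_prob} already handles this consistency, so I would simply check that the same $\eta$ is used throughout.
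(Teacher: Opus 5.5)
Your proposal is correct and follows the paper's proof. Both arguments take a union bound over $s$, dominate the bad balanced sets of size $s$ by $X_{s,r}$ via Lemma~\ref{lem:badbound}, bound each term by Lemma~\ref{lem:estimate_prob}, and convert $\bis(H_\sigma)\ge\eta L$ into crossings with Lemma~\ref{lem:bisection-crossing} at $\Delta=4$.

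Your bookkeeping is cleaner than the paper's in two places:
\begin{itemize}
\item Your union bound over $s$ is stated as an inequality; the paper writes an equality there, which holds only as $\le$.
\item You take $c_\eta=\eta^2/256$ from the first inequality of Lemma~\ref{lem:bisection-crossing}, which is the correct value. The paper's $c_\eta=16^2/\eta^2$ inherits the inverted constant in that lemma's ``in particular'' clause. The threshold $L\ge 256/\eta^2$ is right, but the constant is not.
\end{itemize}
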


\begin{proof}
Let $\eta$ and $L > L_{0}$ be as in Lemma \ref{lem:estimate_prob}.  Let $c_{\eta} = 16^{2}/\eta^{2}$, and in addition, ensure that $L > c_{\eta}$.  Pick a random linear ordering $\sigma$ of $[L]$ from the $L!$ choices.  Let $s$ be an integer in $[L/3,2L/3]$, and $Z_{s,\eta}$ be the random variable which counts the number of bad balanced subsets of $H_{\sigma}$ of size $s$.  By Lemma \ref{lem:badbound}, $Z_{s,\eta}(\sigma) \leq X_{s,r}(\sigma)$ for each $s$ because every bad balanced subset of $H_{\sigma}$ of size $s$ is the separated union of at most $r$-intervals in both $[L]_{0}$ and $[L]_{\sigma}$.  By Lemma \ref{lem:estimate_prob}, $\Fpr(X_{s,r} > 0) \leq e^{-aL}$ for each $s \in [L/3,2L/3]$ where $a = (3/16) h(1/3)$.  If we define $Z_{\eta}$ to be the sum over all $Z_{s,\eta}$ for each $s \in [L/3,2L/3]$ which counts the number of bad subsets of $H_{\sigma}$, then we have the following inequalities below.
\begin{equation*}
\Fpr\left(Z_{\eta} > 0\right) = \sum_{s \in [L/3,2L/3]} \Fpr\left(Z_{s,\eta} > 0 \right) \leq \sum_{s \in [L/3,2L/3]} \Fpr\left(X_{s,r} > 0 \right) \leq Le^{-aL} \leq e^{-bL}
\end{equation*}
The last inequality follows by choice of $L$ which satisfies the second inequality of Equation \ref{eq:linequal}.  Thus the probability there exists a bad subset in $H_{\sigma}$ is exponentially decreasing as claimed.  \\
\\
In particular, this means that a randomly selected linear ordering $\sigma_{L}$ on $[L]$ for sufficiently large $L$ generically induces a path-pair graph $H_{\sigma_{L}}$ whose edge bisection width is at least $\eta L$.  By Lemma \ref{lem:bisection-crossing}, $\crn(H_{\sigma_{L}}) \geq c_{\eta} L^{2}$ where we choose $\Delta = 4$ because $H_{\sigma_{L}}$ is the union of two Hamiltonian paths.  As the inequalities hold for a fixed $\eta$ and all $L > \max\{L_{0},c_{\eta}\}$, this produces the desired sequence of such permutations $\sigma_{L}$.  
\end{proof}

\section{Assembling Growth Rates}
\label{sec:growthrates}

This section combines the upper and lower bounds from Sections \ref{sec:upper-bound} and \ref{sec:lower-bound} to realize growth rates for meandering numbers of permutations between linear and quadratic.  The method is as follows.  Choose a block length function $L=L(n)$ satisfying $1 \leq L(n) \leq n$ with $L(n)$ tending to infinity and tile $[n]$ with crossing-rich tiles of length $L(n)$ from Theorem \ref{thm:random-path} so that the upper bound of Section~\ref{sec:upper-bound} and the lower bound of Section~\ref{sec:lower-bound} pin the crossing number to $\Theta(nL(n))$ from both sides.  This is the content of Theorem~\ref{thm:intermediate}, which produces cyclic permutations $\sigma_{n}$ of $[n]$ with meandering number $\mu(\sigma_{n}) = n+\Theta(nL(n))$ for every prescribed scale $L(n)$.  We then read off four specializations as in Corollary~\ref{cor:natural-families} which records the families $\Theta(n)$, $\Theta(n\log n)$, $\Theta(n^{1+\alpha})$, and $\Theta(n^2)$ as meandering numbers.  We conclude this work with Corollary~\ref{cor:random-global} which proves Schwartz's quadratic meander number conjecture for sufficiently large $n$.


\begin{theorem}[All intermediate scales]\label{thm:intermediate}
Let $L(n)$ be an integer-valued function with
\begin{equation*}
        L(n)\to\infty,
        \qquad
        1\le L(n)\le n.
\end{equation*}
Then there exist cyclic permutations $\sigma_n$ of $[n]$ such that
\begin{equation*}
        \mu(\sigma_n)=n+\Theta(nL(n)).
\end{equation*}
\end{theorem}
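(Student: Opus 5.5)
We build $\sigma_n$ by concatenating crossing-rich blocks of length about $L=L(n)$, and then pin $\jcr(\sigma_n)$ between the block lower bound of Proposition~\ref{prop:block} and the block wiring upper bound of Proposition~\ref{prop:block-upper}.

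\textbf{Construction.} Write $n=qL+\rho$ with $q=\lfloor n/L\rfloor\ge 1$ and $0\le\rho<L$. Partition $[n]$ into $q$ consecutive intervals $I_1<\cdots<I_q$, the first $q-1$ of length $L$ and the last of length $L+\rho<2L$. All block lengths then lie in $[L,2L)$, and $qL\ge n/2$.

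Since $L(n)\to\infty$, for $n$ large every block length $L_i$ exceeds the threshold $\max\{L_0,c_\eta\}$ of Theorem~\ref{thm:random-path}. For each block we choose a linear ordering $\sigma_i$ of $I_i$, transported from $[L_i]$ by translation, so that $\crn(H_{\sigma_i})\ge c\,L_i^2$. Such an ordering exists because a random ordering works with probability at least $1-e^{-bL_i}>0$. We must use a genuine positive constant $c$ here; the constant displayed in the paper should be read as ``some $c_\eta>0$'' coming from $\crn(H)\ge \bis(H)^2/(8\Delta^2)-m$ with $\bis\ge\eta L_i$ and $\Delta=4$, giving $c=\eta^2/256$ once $L_i\ge 256/\eta^2$. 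Concatenating the $\sigma_i$ as before Proposition~\ref{prop:block-upper} gives a cyclic interval permutation $\sigma_n$ of $[n]$.

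\textbf{Two-sided bound.} By Lemma~\ref{lem:cr-lower}, $\mu(\sigma_n)-n=\jcr(\sigma_n)=\crn(G_{\sigma_n})$. Proposition~\ref{prop:block} then gives
\begin{equation*}
c\sum_{i} L_i^2\le\sum_{i}\crn(H_{\sigma_i})\le \crn(G_{\sigma_n})\le \sum_{i} L_i^2 .
\end{equation*}
Because $L\le L_i<2L$, we have $\sum_i L_i^2\ge L\sum_i L_i=nL$ and $\sum_i L_i^2\le 2L\sum_i L_i=2nL$. Hence
\begin{equation*}
c\,nL\le \mu(\sigma_n)-n\le 2nL,
\end{equation*}
which is $\mu(\sigma_n)=n+\Theta(nL(n))$. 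For the finitely many small $n$ we may take any $\sigma_n$, since $\Theta$ is asymptotic.

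\textbf{Main obstacle.} The delicate step is the lower bound. We must be sure that the blocks $H_{\sigma_i}$ sit inside $G_{\sigma_n}$ as pairwise edge-disjoint subgraphs: each block contributes its internal natural-path edges and its internal $\sigma$-path edges, and the connecting edges between blocks are discarded. We must also be sure that the per-block constant $c$ is uniform in $L_i$, which holds because $\eta$ is fixed and each $L_i\ge L\to\infty$.

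The case $L(n)=n$ deserves a check: there $q=1$ and the single block is all of $[n]$, so the argument reduces directly to Theorem~\ref{thm:random-path} together with Corollary~\ref{cor:global-upper}.
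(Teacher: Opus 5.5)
Your proposal is correct and follows essentially the same route as the paper: crossing-rich blocks supplied by Theorem~\ref{thm:random-path}, with $\jcr(\sigma_n)$ sandwiched between Proposition~\ref{prop:block} and Proposition~\ref{prop:block-upper}. The only difference is the remainder. The paper keeps a separate block of size $R<L$ with the identity order and uses $qL^2\ge nL/2$. You fold the remainder into the last block so all lengths lie in $[L,2L)$; this needs a crossing-rich order at a second length, but it gives $c\,nL$ directly and avoids the $L_i\ge 2$ hypothesis of Proposition~\ref{prop:block} when $R=1$. Your reading of the per-block constant as $\eta^2/256$, rather than the printed $16^2/\eta^2$, is the right one.
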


\begin{proof}
Write $L=L(n)$, and let
\begin{equation*}
        q=\left\lfloor \frac{n}{L}\right\rfloor,
        \qquad
        R=n-qL.
\end{equation*}
Thus $0\le R<L$.  For sufficiently large $n$, $L(n) \ge L_0$, where $L_0$ is as in Theorem~\ref{thm:random-path}.  Partition $[n]$ into
$q$ consecutive blocks $I_{j}$ of size $L$, followed by one final remainder block of size $R$ if $R>0$.  Inside each full block, place a translated copy of the crossing-rich order $\sigma_L$ from Theorem~\ref{thm:random-path} which is quadratically bounded below in crossing number $cL^{2}$ for some fixed $c$.  In the remainder block, if present, use the identity order.  Concatenate the blocks in their natural order.  This defines a cyclic permutation $\sigma_n$.   \\
\\
Because $\sigma_{n}$ contains $q$-copies of the crossing-rich tile of $\sigma_{L}$, each of which is quadratically bounded below in crossing number by $c_{\eta}L^{2}$, Proposition~\ref{prop:block} gives
\begin{equation*}
        \crn(G_{\sigma_{n}})\ge q\crn(H_{\sigma_{L}}) \geq qc_{\eta}L^2.
\end{equation*}
By definition of $q$ and $L$, it follows that $qL^{2} \geq nL/2$, thus 
\begin{equation*}
        \crn(G_{\sigma_{n}}) \ge \frac{c_{\eta}}{2} nL.
\end{equation*}
For the upper bound, Proposition~\ref{prop:block-upper} and Lemma \ref{lem:cr-lower} give
\begin{equation*}
        \crn(G_{\sigma_{n}}) \le qL^2+R^2.
\end{equation*}
Since $qL^2\le nL$ and $R^2<L^2\le nL$, we have
\begin{equation*}
        \crn(G_{\sigma_{n}}) \le 2nL.
\end{equation*}
Combining this with the lower bound gives
\begin{equation*}
        \crn(G_{\sigma_{n}}) =\Theta(nL).
\end{equation*}
Finally, Lemma~\ref{lem:exact} gives $\mu(\sigma_n)=n+\Theta(nL(n))$ as claimed.
\end{proof}

In the following corollary, we record several familiar asymptotic growth rates as immediate special cases of Theorem~\ref{thm:intermediate}.

\begin{corollary}\label{cor:natural-families}
There are cyclic permutations on $n$ letters with the following growth rates in Jordan crossing number:
\begin{equation*}
        \Theta(n),\qquad
        \Theta(n\log n),\qquad
        \Theta(n^{1+\alpha})\quad(0<\alpha<1),\qquad
        \Theta(n^2).
\end{equation*}
\end{corollary}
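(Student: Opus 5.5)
The plan is to read off each growth rate from Theorem~\ref{thm:intermediate} by choosing the block length $L(n)$. Lemma~\ref{lem:cr-lower} identifies $\jcr(\sigma_n)$ with $\crn(G_{\sigma_n})$, and the proof of Theorem~\ref{thm:intermediate} shows $\crn(G_{\sigma_n})=\Theta(nL(n))$. So for each target rate it suffices to exhibit an integer-valued $L(n)$ with $1\le L(n)\le n$ and $L(n)\to\infty$ such that $nL(n)$ has the desired order.

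The three superlinear cases are direct substitutions:
\begin{itemize}
\item For $\Theta(n\log n)$, take $L(n)=\max\{1,\lfloor\log n\rfloor\}$.
\item For $\Theta(n^{1+\alpha})$ with $0<\alpha<1$, take $L(n)=\lfloor n^{\alpha}\rfloor$.
\item For $\Theta(n^2)$, take $L(n)=n$.
\end{itemize}
In each case $L(n)\to\infty$, $1\le L(n)\le n$ for large $n$, and $L(n)=\Theta(g(n))$ with $g=\log n,\ n^\alpha,\ n$ respectively. Floors only change constants, so $nL(n)=\Theta(ng(n))$. The finitely many small $n$ where the hypotheses or the threshold $L(n)\ge L_0$ fail do not affect $\Theta$-statements, which only concern sufficiently large $n$.

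The $\Theta(n)$ case is the main obstacle. Theorem~\ref{thm:intermediate} requires $L(n)\to\infty$, so it cannot give linear growth directly. I would instead fix a constant $L=L_1>L_0$ for which Theorem~\ref{thm:random-path} provides $\sigma_{L_1}$ with $\crn(H_{\sigma_{L_1}})\ge c_\eta L_1^2>0$, and rerun the tiling argument from the proof of Theorem~\ref{thm:intermediate}. That argument only needs each full block to be crossing-rich, not $L\to\infty$.

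\begin{itemize}
\item \emph{Lower bound.} Proposition~\ref{prop:block} gives $\crn(G_{\sigma_n})\ge q\,c_\eta L_1^2$ with $q=\lfloor n/L_1\rfloor\ge n/(2L_1)$ for $n\ge L_1$. This is at least a positive constant times $n$.
\item \emph{Upper bound.} Proposition~\ref{prop:block-upper} gives $\crn(G_{\sigma_n})\le qL_1^2+R^2\le nL_1+L_1^2=O(n)$.
\end{itemize}

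Hence $\jcr(\sigma_n)=\Theta(n)$. If the remainder block has size $1$, Proposition~\ref{prop:block} requires block lengths at least $2$. This is handled by not counting the remainder on the lower-bound side, since the edge-disjoint sub-multigraph argument only uses the full blocks. Alternatively, one can merge the remainder into the last full block, which changes only constants.
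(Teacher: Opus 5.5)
Your superlinear cases are the paper's proof: substitute $L(n)=\lfloor\log n\rfloor$, $\lfloor n^{\alpha}\rfloor$, $n$ into Theorem~\ref{thm:intermediate}. The linear case is where you genuinely diverge, and your route is the better one.

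The paper simply cites the identity permutation. That gives $\mu(\mathrm{id})\in\{n,n+1\}$, which is linear in the \emph{meander number}, and that is how the surrounding text describes this corollary. But the identity is itself cyclic meandric for even $n$, so $\jcr(\mathrm{id})=0$ for even $n$ and $\jcr(\mathrm{id})=1$ for odd $n$. For the statement as literally written, in terms of Jordan crossing number, the identity therefore does not give $\Theta(n)$.

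Your fixed-block tiling gives $\jcr(\sigma_n)=\Theta(n)$, and hence also $\mu(\sigma_n)=\Theta(n)$, so it proves the corollary under either reading. Because $L_1$ is fixed, you only use $\crn(H_{\sigma_{L_1}})\ge 1$, so Theorem~\ref{thm:random-path} (with its stated $c_\eta$) is more than you need. Any fixed block with non-planar path-pair graph works. For example, take $\tau=(3,0,5,2,4,1)$ on $[6]$: $H_\tau$ contains $K_{3,3}$ with parts $\{0,2,4\}$ and $\{1,3,5\}$. This gives the explicit bounds $\lfloor n/6\rfloor\le\jcr(\sigma_n)\le 36\lfloor n/6\rfloor+25$.

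Your argument only uses $\crn(G_\sigma)\le\jcr(\sigma)$ together with the direct upper bound of Proposition~\ref{prop:block-upper}. That is prudent, since the reverse inequality in Lemma~\ref{lem:cr-lower} fails by parity: for odd $n$, $\crn(G_{\mathrm{id}})=0<1=\jcr(\mathrm{id})$. Finally, your remark about a remainder block of size $1$ is correct. It patches a small oversight in how Proposition~\ref{prop:block}, which assumes $L_i\ge 2$, is invoked in the proof of Theorem~\ref{thm:intermediate}.
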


\begin{proof}
The linear family comes from the identity permutation on $n$-letters.  For the remaining families apply Theorem~\ref{thm:intermediate} with $L(n)$ equal to $\lfloor \log n\rfloor, \lfloor n^\alpha\rfloor$, and $n$ respectively where $\alpha$ is some real number strictly between $0$ and $1$ and use the fact that Jordan crossing number equals the crossing number of $G_{\sigma_{n}}$ by Lemma \ref{lem:cr-lower}.  
\end{proof}

Corollary \ref{cor:natural-families} provides families of cyclic permutations with several different growth rates in the meandering number.  The largest of which when $L(n) = n$ follows readily from Theorem \ref{thm:random-path}, and shows there is $c > 0$ so that for sufficiently large $n$, one can always find a linear ordering on $[n]$ that will induce a permutation whose meander number is bounded below by $cn^{2}$.  We can then scale $c$ sufficiently small so that the meander number is bounded below for all $n$ that were not sufficiently large.  This observation combined with Corollary \ref{cor:global-upper} proves Schwartz' Conjecture 3.4 (Meander Number), and hence his Topological Salesman Conjecture as discussed in Section \ref{sec:introduction}, to be true for sufficiently large $n$.  For completeness we state this as a Corollary below.  
\begin{corollary}[Asymptotic Quadratic Meander Number Growth]\label{cor:random-global}
There exists constants $c, C > 0$ such that the probability a randomly selected linear ordering on $[n]$ induces a cyclic permutation $\sigma_{n}$ satisfies 
\begin{equation}\label{eq:thetan2}
        cn^{2} \leq \mu(\sigma_{n}) \leq Cn^{2}
\end{equation}
tends to $1$ in $n$.  In particular, this means there exists permutations satisfying the above inequalities for all $n$, thus $\mu_{n} = \Theta(n^{2})$.    
\end{corollary}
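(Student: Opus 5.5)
The plan is to combine the global upper bound of Corollary~\ref{cor:global-upper} with the probabilistic lower bound of Theorem~\ref{thm:random-path}, applied with the single block $L=n$.

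\textbf{Upper bound.} For every cyclic permutation $\sigma$ of $[n]$, Corollary~\ref{cor:global-upper} gives $\mu(\sigma)\le n^2+n\le 2n^2$. Hence $C=2$ works deterministically, with no probability involved.

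\textbf{Lower bound.} Let $\sigma$ be a uniformly random linear ordering of $[n]$, and let $\eta$, $L_0$, $b$ be as in Theorem~\ref{thm:random-path}. For $n>L_0$ the theorem applies with $L=n$. Outside an event of probability at most $e^{-bn}$, every balanced $U\subset[n]$ satisfies $|\partial_{H_\sigma}U|\ge \eta n$, so $\bis(H_\sigma)\ge \eta n$.

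On this event I would apply Lemma~\ref{lem:bisection-crossing} with $\Delta=4$ and $m=n$. This gives
\begin{equation*}
\crn(H_\sigma)\ge \frac{\eta^2 n^2}{128}-n \ge \frac{\eta^2}{256}n^2
\end{equation*}
once $n\ge 256/\eta^2$. Here I would use the first displayed inequality of the lemma directly rather than its stated corollary, whose constant looks inverted.

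Next, $H_\sigma$ is a submultigraph of $G_\sigma$: it is $G_\sigma$ with two edges deleted. Any drawing of $G_\sigma$ restricts to a drawing of $H_\sigma$, so $\crn(G_\sigma)\ge\crn(H_\sigma)$. This is also the case $q=1$ of Proposition~\ref{prop:block}. Lemma~\ref{lem:cr-lower} then gives
\begin{equation*}
\mu(\sigma)=n+\crn(G_\sigma)\ge \frac{\eta^2}{256}n^2.
\end{equation*}
So with $c=\eta^2/256$, the probability that \eqref{eq:thetan2} holds is at least $1-e^{-bn}\to 1$.

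\textbf{Existence for all $n$ and $\mu_n=\Theta(n^2)$.} For $n$ large, the probability above is positive, so some $\sigma_n$ satisfies \eqref{eq:thetan2}. For the finitely many small $n$, any cyclic $\sigma$ has $\mu(\sigma)\ge n\ge 1$. Shrinking $c$ to at most $\min_{n\le n_0}1/n$ makes $cn^2\le n\le\mu(\sigma)$ hold there as well. Since $\mu_n$ is a maximum over all $\sigma$, it is at least $\mu(\sigma_n)\ge cn^2$, and it is at most $2n^2$ by the upper bound.

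\textbf{Main obstacle.} The only delicate point is the bookkeeping between Theorem~\ref{thm:random-path}, stated for linear orderings and path-pair graphs, and the cyclic setting. Passing from $H_\sigma$ to $G_\sigma$ only adds edges, so crossings can only increase, and the lower bound transfers directly. I would also check the constant in Lemma~\ref{lem:bisection-crossing}, recomputing the explicit $c$ from its first inequality so the argument does not rely on the misstated second form.
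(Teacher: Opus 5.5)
Your proposal is correct and follows the paper's own route. The lower bound uses Theorem~\ref{thm:random-path} with a single block $L=n$, Lemma~\ref{lem:bisection-crossing} with $\Delta=4$, and $H_\sigma\subset G_\sigma$ with Lemma~\ref{lem:cr-lower}; Corollary~\ref{cor:global-upper} gives $C=2$, and $c$ is then shrunk to cover the finitely many small $n$. You were also right to use the first inequality of Lemma~\ref{lem:bisection-crossing}: its second assertion, and the paper's closing remark with $c=16^2/\eta^2$, have the constant inverted, and the correct value is $\eta^2/(16\Delta^2)=\eta^2/256$, exactly as you computed.
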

We conclude by remarking our results are non-constructive, however, we are able to provide some explicit bounds.  So long as $\eta$ and $L_{0}$ in Theorem \ref{thm:random-path} are chosen to satisfy the various inequalities of Equation \ref{eq:linequal} and $L_{0} > 16^{2}/\eta^{2}$, the inequalities of Equation \ref{eq:thetan2} hold with $c = 16^{2}/\eta^{2}$, $C = 2$, and $n > L_{0}$.  This shows are results are eventually effective, but, as remarked above we may scale $c$ down so the inequalities hold for all $n$.  We thank Richard Schwartz for pointing this argument out.  \\
\\
We note that the value of $n$ for which the inequalities become effective is quite large.  One would have to verify, at least, the first $16^{2}/(1/6)^{2} = 9,216$ cases as we assume $\eta < 1/6$ in Lemma \ref{lem:binent} so $L > c_{\eta} > 16^{2}/\eta^{2}$.  It would be of great interest to provide an algorithm to construct a family of such permutations $\sigma_{n}$ on $[n]$ with quadratically growing meander number.  


\bibliographystyle{amsalpha}
\bibliography{refs}

\end{document}

%% file: figures/meander-jordan-dictionary.tex
\begin{figure}[ht]
\centering

\begin{subfigure}[c]{0.31\textwidth}
\centering
\resizebox{\linewidth}{!}{%
\begin{tikzpicture}[x=1.15cm,y=1.15cm, every node/.style={font=\scriptsize}]
  \colorlet{Scolor}{green!50!black}
  \colorlet{Sfill}{green!70!black}

  \tikzset{
    directed arc/.style={
      very thick,
      dashed,
      postaction={decorate},
      decoration={
        markings,
        mark=at position 0.5 with {\arrow{stealth}}
      }
    }
  }

  \path[use as bounding box] (-.45,-1.95) rectangle (5.45,2.45);

\draw[line width=1.4pt] (-.45,0) -- (5.45,0);
\draw[line width=1.4pt,->,>=stealth] (2.25,0) -- (2.75,0);

  \draw[directed arc] (0,0) arc[start angle=180,end angle=0,radius=1.5];
  \draw[directed arc] (4,0) arc[start angle=180,end angle=0,radius=.5];
  \draw[directed arc] (2,0) arc[start angle=0,end angle=180,radius=.5];

  \draw[directed arc] (3,0) arc[start angle=180,end angle=360,radius=.5];
  \draw[directed arc] (5,0) arc[start angle=360,end angle=180,radius=1.5];
  \draw[directed arc] (1,0) arc[start angle=360,end angle=180,radius=.5];

  \foreach \x/\lab in {2/2,3/3}{
    \node[fill,circle,minimum size=6pt,inner sep=0pt] at (\x,0) {};
    \node[below=10pt] at (\x,0) {$\lab$};
  }

  \foreach \x/\lab in {0/0,1/1,4/4,5/5}{
    \node[
      draw=Scolor,
      fill=Sfill,
      thick,
      rectangle,
      minimum size=6pt,
      inner sep=0pt
    ] at (\x,0) {};
    \node[below=10pt,text=Scolor] at (\x,0) {$\lab$};
  }
\end{tikzpicture}%
}
\caption{}
\end{subfigure}
\hfill
\begin{subfigure}[c]{0.31\textwidth}
\centering
\resizebox{\linewidth}{!}{%
\begin{tikzpicture}[x=1.05cm,y=1.05cm, every node/.style={font=\scriptsize}]
  \colorlet{Scolor}{green!50!black}
  \colorlet{Sfill}{green!70!black}

  \tikzset{
    directed arc/.style={
      very thick,
      dashed,
      postaction={decorate},
      decoration={
        markings,
        mark=at position 0.5 with {\arrow{stealth}}
      }
    }
  }

  \draw[very thick]
    (-.45,0) -- (5.45,0) -- (5.45,2.15) -- (-.45,2.15) -- cycle;

  \draw[very thick,<-,>=stealth] (2.15,2.15) -- (2.85,2.15);

  \draw[directed arc] (0,0) -- (0,1.5) -- (3,1.5) -- (3,0);
  \draw[directed arc] (4,0) -- (4,.5) -- (5,.5) -- (5,0);
  \draw[directed arc] (2,0) -- (2,.5) -- (1,.5) -- (1,0);

  \draw[directed arc] (3,0) -- (3,-.5) -- (4,-.5) -- (4,0);
  \draw[directed arc] (5,0) -- (5,-1.5) -- (2,-1.5) -- (2,0);
  \draw[directed arc] (1,0) -- (1,-.5) -- (0,-.5) -- (0,0);

  \foreach \x/\lab in {2/2,3/3}{
    \node[
      fill,
      circle,
      minimum size=6pt,
      inner sep=0pt
    ] at (\x,0) {};
    \node[below=8pt] at (\x,0) {$\lab$};
  }

  \foreach \x/\lab in {0/0,1/1,4/4,5/5}{
    \node[
      draw=Scolor,
      fill=Sfill,
      thick,
      rectangle,
      minimum size=6pt,
      inner sep=0pt
    ] at (\x,0) {};
    \node[below=8pt,text=Scolor] at (\x,0) {$\lab$};
  }

\end{tikzpicture}%
}
\caption{}
\end{subfigure}
\hfill
\begin{subfigure}[c]{0.25\textwidth}
\centering
\resizebox{.52\linewidth}{!}{%
\begin{tikzpicture}[scale=.80, every node/.style={font=\scriptsize}]
  \colorlet{Scolor}{green!50!black}
  \colorlet{Sfill}{green!70!black}

  \draw[very thick]
    (0,4.2) -- (0,1) -- (.5,1)
    -- (3,1) -- (3,2) -- (.5,2)
    -- (.5,3) -- (2.5,3) -- (3,3) -- (3,4.2)
    -- (3,4.75) -- (0,4.75) -- cycle;

  \draw[very thick,dashed]
    (1,1) -- (1,-.4) -- (0,-.4) -- (0,-2.15)
    -- (3,-2.15) -- (3,-.4) -- (2,-.4) -- (2,1)
    -- (2,3.5) -- (1,3.5) -- (1,1);

  \draw[very thick,->,>=stealth] (1.75,4.75) -- (1.25,4.75);
  \draw[very thick,dashed,->,>=stealth] (1.75,-2.15) -- (1.25,-2.15);

  \foreach \p/\lab/\pos in {
    {(1,1)}/0/below left,
    {(2,1)}/1/below right,
    {(1,3)}/4/above left,
    {(2,3)}/5/above right}{
      \node[
        draw=Scolor,
        fill=Sfill,
        thick,
        rectangle,
        minimum size=6pt,
        inner sep=0pt
      ] at \p {};
      \node[\pos,text=Scolor] at \p {$\lab$};
    }
  \foreach \p/\lab/\pos in {
    {(2,2)}/2/right,
    {(1,2)}/3/left}{
      \node[
        fill,
        circle,
        minimum size=6pt,
        inner sep=0pt
      ] at \p {};
      \node[\pos,yshift=-5pt] at \p {$\lab$};
    }
\end{tikzpicture}%
}
\caption{}
\end{subfigure}

\caption{
Three equivalent realizations of the permutation $(0, 2, 3, 1)$ as the first return map of a meanderic permutation:
\textup{(a)} with a semicircular meander diagram,
\textup{(b)} with a PLJordan realization (Section~\ref{sec:pl-jordan}), and
\textup{(c)} with a permutation tile (Section~\ref{sec:lower-bound}).
The meandric permutation
$(0,3,4,5,2,1)$ has first-return $(0,4,5,1)$ on
$S=\{0,1,4,5\}$, which becomes $(0,2,3,1)$ after relabeling $S$ in its
inherited order.}
\label{fig:mjd}
\end{figure}

%% file: figures/permutation-tile.tex
\begin{figure}[ht]
\centering
\begin{tikzpicture}[scale=.65, every node/.style={font=\scriptsize}]
  \colorlet{Scolor}{green!50!black}
  \colorlet{Sfill}{green!70!black}

  \tikzset{
    crossinglabel/.style={
      anchor=south west,
      xshift=3pt,
      yshift=2pt,
      inner sep=0pt,
      font=\tiny
    }
  }

  \draw[rounded corners] (-1,-2) rectangle (6,6);

  \draw[very thick]
    (0,6) -- (0,1) -- (.5,1)
    -- (4.5,1) -- (4.5,2) -- (.5,2) -- (.5,3) -- (4.5,3)
    -- (5,3) -- (5,6);

  \draw[very thick,dashed]
    (0,-2) -- (0,-1) -- (1,-1) -- (1,.5)
    -- (1,3.5) -- (2,3.5) -- (2,.5) -- (3,.5)
    -- (3,3.5) -- (4,3.5) -- (4,.5)
    -- (4,-1) -- (5,-1) -- (5,-2);

  \draw[very thick,->,>=stealth] (2.75,2) -- (2.25,2);
  \draw[very thick,dashed,->,>=stealth] (2.25,.5) -- (2.75,.5);

  \foreach \x/\y/\lab in {
    1/1/0,
    2/1/1,
    4/1/3,
    4/2/4,
    3/2/5,
    2/2/6,
    1/3/8,
    3/3/10,
    4/3/11}{
    \node[
      fill,
      circle,
      minimum size=6pt,
      inner sep=0pt
    ] at (\x,\y) {};
    \node[crossinglabel] at (\x,\y) {$\lab$};
  }

  \foreach \x/\y/\lab in {
    3/1/2,
    1/2/7,
    2/3/9}{
    \node[
      draw=Scolor,
      fill=Sfill,
      thick,
      rectangle,
      minimum size=6pt,
      inner sep=0pt
    ] at (\x,\y) {};
    \node[crossinglabel,text=Scolor] at (\x,\y) {$\lab$};
  }

  \node[above] at (0,6) {$A^-$};
  \node[above] at (5,6) {$A^+$};
  \node[below] at (0,-2) {$B^-$};
  \node[below] at (5,-2) {$B^+$};
\end{tikzpicture}
\caption{A permutation tile realizing the permutation \((1,2,0)\) as a first
return map. The crossings are labeled in their order along \(A\), so the
order along \(A\) is \((0,1,2,3,4,5,6,7,8,9,10,11)\), while the order along
\(B\) is \((0,7,8,9,6,1,2,5,10,11,4,3)\). The distinguished set
\(S=\{2,7,9\}\) has first-return \((7,9,2)\), which becomes \((1,2,0)\) after
relabeling \(S\) in its inherited order.}
\label{fig:permutation-tile}
\end{figure}

%% file: figures/corridor.tex
\begin{figure}[ht]
\centering
\begin{tikzpicture}[scale=.95, every node/.style={font=\scriptsize},
  conn/.style={very thick,->,>=stealth},
  bconn/.style={very thick,dashed,->,>=stealth},
  midconn/.style={very thick,postaction={decorate},
    decoration={markings,mark=at position .5 with {\arrow{stealth}}}},
  midbconn/.style={very thick,dashed,postaction={decorate},
    decoration={markings,mark=at position .5 with {\arrow{stealth}}}},
  plainconn/.style={very thick},
  plainbconn/.style={very thick,dashed}]

  \def\W{1.7}

  \foreach \x/\lab in {0/1,3/2,7/{q-1},10/q}{
    \draw[rounded corners] (\x,0) rectangle (\x+\W,1.6);
    \node at (\x+.85,.8) {$Q_{\lab}$};
    \fill (\x+.3,1.6) circle (1.7pt);
    \fill (\x+\W-.3,1.6) circle (1.7pt);
    \fill (\x+.3,0) circle (1.7pt);
    \fill (\x+\W-.3,0) circle (1.7pt);
  }
  \node at (5.7,.75) {$\cdots$};

  \node[font=\fontsize{4}{4.5}\selectfont] at (.3,1.35) {$A_1^-$};
  \node[font=\fontsize{4}{4.5}\selectfont] at (\W-.3,1.35) {$A_1^+$};
  \node[font=\fontsize{4}{4.5}\selectfont] at (.3,.25) {$B_1^-$};
  \node[font=\fontsize{4}{4.5}\selectfont] at (\W-.3,.25) {$B_1^+$};

  \node[font=\fontsize{4}{4.5}\selectfont] at (3.3,1.35) {$A_2^-$};
  \node[font=\fontsize{4}{4.5}\selectfont] at (3+\W-.3,1.35) {$A_2^+$};
  \node[font=\fontsize{4}{4.5}\selectfont] at (3.3,.25) {$B_2^-$};
  \node[font=\fontsize{4}{4.5}\selectfont] at (3+\W-.3,.25) {$B_2^+$};

  \node[font=\fontsize{4}{4.5}\selectfont] at (7.3,1.35) {$A_{q-1}^-$};
  \node[font=\fontsize{4}{4.5}\selectfont] at (7+\W-.3,1.35) {$A_{q-1}^+$};
  \node[font=\fontsize{4}{4.5}\selectfont] at (7.3,.25) {$B_{q-1}^-$};
  \node[font=\fontsize{4}{4.5}\selectfont] at (7+\W-.3,.25) {$B_{q-1}^+$};

  \node[font=\fontsize{4}{4.5}\selectfont] at (10.3,1.35) {$A_q^-$};
  \node[font=\fontsize{4}{4.5}\selectfont] at (10+\W-.3,1.35) {$A_q^+$};
  \node[font=\fontsize{4}{4.5}\selectfont] at (10.3,.25) {$B_q^-$};
  \node[font=\fontsize{4}{4.5}\selectfont] at (10+\W-.3,.25) {$B_q^+$};

  \draw[midconn]
    (\W-.3,1.6) -- (\W-.3,2.25) -- (3.3,2.25) -- (3.3,1.6);
  \draw[plainconn]
    (3+\W-.3,1.6) -- (3+\W-.3,2.25) -- (5.1,2.25);
  \draw[conn] (4.45,2.25) -- (4.75,2.25);
  \node at (5.7,2.25) {$\cdots$};
  \draw[plainconn]
    (6.3,2.25) -- (7.3,2.25) -- (7.3,1.6);
  \draw[conn] (6.65,2.25) -- (6.95,2.25);
  \draw[midconn]
    (7+\W-.3,1.6) -- (7+\W-.3,2.25) -- (10.3,2.25) -- (10.3,1.6);
  \draw[midconn]
    (10+\W-.3,1.6) -- (10+\W-.3,3.25) -- (.3,3.25) -- (.3,1.6);

  \draw[midbconn]
    (\W-.3,0) -- (\W-.3,-.65) -- (3.3,-.65) -- (3.3,0);
  \draw[plainbconn]
    (3+\W-.3,0) -- (3+\W-.3,-.65) -- (5.1,-.65);
  \draw[bconn] (4.45,-.65) -- (4.75,-.65);
  \node at (5.7,-.65) {$\cdots$};
  \draw[plainbconn]
    (6.3,-.65) -- (7.3,-.65) -- (7.3,0);
  \draw[bconn] (6.65,-.65) -- (6.95,-.65);
  \draw[midbconn]
    (7+\W-.3,0) -- (7+\W-.3,-.65) -- (10.3,-.65) -- (10.3,0);
  \draw[midbconn]
    (10+\W-.3,0) -- (10+\W-.3,-1.65) -- (.3,-1.65) -- (.3,0);

\end{tikzpicture}
\caption{The corridor closure.  The $A$-connectors are drawn in the upper
corridor and the $B$-connectors in the lower corridor, so the closure adds
no new $A$-$B$ intersections.}
\label{fig:corridor}
\end{figure}

%% file: figures/scrambled.tex
\begin{figure}[ht]
\centering
\resizebox{.82\textwidth}{!}{%
\begin{tikzpicture}[x=1.0cm,y=1.0cm, every node/.style={font=\scriptsize}]
  \colorlet{Vcolor}{black}
  \colorlet{HColor}{red!75!black}
  \colorlet{BoxFill}{red!10}
  \colorlet{GlobalColor}{cyan!75!black}

  \tikzset{
    local dotted arc/.style={
      line width=1.35pt,
      densely dotted,
      draw=HColor,
      postaction={decorate},
      decoration={
        markings,
        mark=at position 0.5 with {\arrow{stealth}}
      }
    },
    global dotted arc/.style={
      line width=1.15pt,
      densely dotted,
      draw=GlobalColor,
      postaction={decorate},
      decoration={
        markings,
        mark=at position 0.5 with {\arrow{stealth}}
      }
    },
    block box/.style={
      rounded corners=3pt,
      draw=HColor,
      fill=BoxFill,
      thick
    }
  }

  \path[use as bounding box] (-1.15,-3.65) rectangle (10.15,2.35);

  \draw[block box] (-.35,-1.45) rectangle (4.35,1.35);
  \draw[block box] (4.65,-.85) rectangle (6.35,.85);
  \draw[block box] (6.65,-1.45) rectangle (9.35,1.35);

  \node[text=HColor] at (2.0,1.55) {$H_1$};
  \node[text=HColor] at (5.5,1.20) {$H_2$};
  \node[text=HColor] at (8.0,1.55) {$H_3$};

  \draw[GlobalColor,line width=1pt]
    (-.85,0) -- (9.85,0) -- (9.85,1.85) -- (-.85,1.85) -- cycle;

  \draw[HColor,line width=1.15pt] (0,0) -- (4,0);
  \draw[HColor,line width=1.15pt] (5,0) -- (6,0);
  \draw[HColor,line width=1.15pt] (7,0) -- (9,0);

  \draw[GlobalColor,line width=1pt,->,>=stealth] (5.3,1.85) -- (4.7,1.85);


  \draw[local dotted arc] (0,0) arc[start angle=180,end angle=360,radius=.5]; 
  \draw[local dotted arc] (1,0) arc[start angle=180,end angle=0,radius=1];    
  \draw[local dotted arc] (3,0) arc[start angle=180,end angle=360,radius=.5]; 
  \draw[local dotted arc] (2,0) arc[start angle=0,end angle=180,radius=1];    

  \draw[global dotted arc] (4,0) arc[start angle=180,end angle=0,radius=1];   

  \draw[local dotted arc] (6,0) arc[start angle=360,end angle=180,radius=.5]; 

  \draw[global dotted arc] (5,0) arc[start angle=180,end angle=0,radius=1];   

  \draw[local dotted arc] (7,0) arc[start angle=180,end angle=360,radius=1];  
  \draw[local dotted arc] (9,0) arc[start angle=0,end angle=180,radius=.5];   

  \draw[global dotted arc] (8,0) arc[start angle=360,end angle=180,radius=3]; 

  \foreach \x in {0,1,...,9}{
    \node[fill=Vcolor,circle,minimum size=5pt,inner sep=0pt] at (\x,0) {};
    \node[below=7pt,text=Vcolor] at (\x,0) {$\x$};
  }
\end{tikzpicture}%
}
\caption{A drawing of the multigraphs $H_j(\sigma)$ for
$\sigma=(2,0,1,3,4,6,5,7,9,8)$, with
$I_1=\{0,1,2,3,4\}$, $I_2=\{5,6\}$, and
$I_3=\{7,8,9\}$.}
\label{fig:scrambled}
\end{figure}

%% file: refs.bib
@misc{georgiev2025mathematical,
  title={Mathematical exploration and discovery at scale},
  author={Georgiev, Bogdan and G{\'o}mez-Serrano, Javier and Tao, Terence and Wagner, Adam Zsolt},
  journal={arXiv preprint arXiv:2511.02864},
  year={2025}
}

@article{LiptonTarjan,
  title={A separator theorem for planar graphs},
  author={Lipton, Richard J and Tarjan, Robert Endre},
  journal={SIAM Journal on Applied Mathematics},
  volume={36},
  number={2},
  pages={177--189},
  year={1979},
  publisher={SIAM}
}

@book{leighton1983complexity,
  title={Complexity issues in VLSI: optimal layouts for the shuffle-exchange graph and other networks},
  author={Leighton, Frank Thomson},
  year={1983},
  publisher={MIT press}
}

@inproceedings{djidjev2001improved,
  title={An improved lower bound for crossing numbers},
  author={Djidjev, Hristo and Vrt’o, Imrich},
  booktitle={International Symposium on Graph Drawing},
  pages={96--101},
  year={2001},
  organization={Springer}
}

@inproceedings{pach1994applications,
  title={Applications of the crossing number},
  author={Pach, J{\'a}nos and Shahrokhi, Farhad and Szegedy, Mario},
  booktitle={Proceedings of the tenth annual symposium on Computational geometry},
  pages={198--202},
  year={1994}
}

@article{djidjev2003crossing,
  title={Crossing numbers and cutwidths},
  author={Djidjev, Hristo and Vrt'o, Imrich},
  journal={Journal of Graph Algorithms and Applications},
  volume={7},
  number={3},
  pages={245--251},
  year={2003}
}

@misc{DBLP:journals/corr/abs-2506-13131,
  author       = {Alexander Novikov and
                  Ng{\^{a}}n Vu and
                  Marvin Eisenberger and
                  Emilien Dupont and
                  Po{-}Sen Huang and
                  Adam Zsolt Wagner and
                  Sergey Shirobokov and
                  Borislav Kozlovskii and
                  Francisco J. R. Ruiz and
                  Abbas Mehrabian and
                  M. Pawan Kumar and
                  Abigail See and
                  Swarat Chaudhuri and
                  George Holland and
                  Alex Davies and
                  Sebastian Nowozin and
                  Pushmeet Kohli and
                  Matej Balog},
  title        = {AlphaEvolve: {A} coding agent for scientific and algorithmic discovery},
  journal      = {CoRR},
  volume       = {abs/2506.13131},
  year         = {2025},
  url          = {https://doi.org/10.48550/arXiv.2506.13131},
  doi          = {10.48550/ARXIV.2506.13131},
  eprinttype   = {arXiv},
  eprint       = {2506.13131},
  bibsource    = {dblp computer science bibliography, https://dblp.org}
}

@article{Poincare1912Sur,
	author = {Poincar{\' e}, H.},
	journal = {Rendiconti del Circolo Matematico di Palermo},
	number = {1},
	year = {1912},
	month = {12},
	pages = {375--407},
	publisher = {{Springer Science and Business Media LLC}},
	title = {Sur un th{\' e}or{\` e}me de g{\' e}om{\' e}trie},
	volume = {33},
}

@article{DIFRANCESCO199797,
title = {Meander, folding, and arch statistics},
journal = {Mathematical and Computer Modelling},
volume = {26},
number = {8},
pages = {97-147},
year = {1997},
issn = {0895-7177},
doi = {https://doi.org/10.1016/S0895-7177(97)00202-1},
url = {https://www.sciencedirect.com/science/article/pii/S0895717797002021},
author = {P. {Di Francesco} and O. Golinelli and E. Guitter}
}

@misc{richtpss,
  author       = {Schwartz, Richard},
  title        = {The {T}opological {S}alesman {P}roblem},
  year         = {2025},
  howpublished = {Preprint},
  note         = {Available at \url{https://www.math.brown.edu/reschwar/Papers/salesman.pdf}},
  eprinttype   = {Self-Published}
}

@article{FurediKundgen2006,
  author  = {Zolt{\'a}n F{\"u}redi and Andr{\'e} K{\"u}ndgen},
  title   = {Moments of graphs in monotone families},
  journal = {Journal of Graph Theory},
  volume  = {51},
  number  = {1},
  pages   = {37--48},
  year    = {2006},
  doi     = {10.1002/jgt.20119}
}

@book{cover2006elements,
  author    = {Cover, Thomas M. and Thomas, Joy A.},
  title     = {Elements of Information Theory},
  publisher = {Wiley-Interscience},
  year      = {2006},
  edition   = {2},
  address   = {Hoboken, NJ},
  isbn      = {978-0-471-24195-9},
  doi       = {10.1002/047174882X}
}
